\documentclass{article}
%%%%%%%%%%%%%%%%%%%%%%%%%%%%%%%%%%%%%%%%%%%%%%%%%%%%%%%%%%%%%%%%%%%%%%%%%%%%%%%%%%%%%%%%%%%%%%%%%%%%%%%%%%%%%%%%%%%%%%%%%%%%%%%%%%%%%%%%%%%%%%%%%%%%%%%%%%%%%%%%%%%%%%%%%%%%%%%%%%%%%%%%%%%%%%%%%%%%%%%%%%%%%%%%%%%%%%%%%%%%%%%%%%%%%%%%%%%%%%%%%%%%%%%%%%%%
\usepackage{amsmath,amsfonts,amssymb}
\usepackage[colorlinks,citecolor=blue,urlcolor=blue]{hyperref}
\RequirePackage{graphicx}

\setcounter{MaxMatrixCols}{10}
%TCIDATA{OutputFilter=LATEX.DLL}
%TCIDATA{Version=5.50.0.2890}
%TCIDATA{<META NAME="SaveForMode" CONTENT="1">}
%TCIDATA{BibliographyScheme=Manual}
%TCIDATA{Created=Wednesday, February 23, 2022 18:31:37}
%TCIDATA{LastRevised=Wednesday, February 23, 2022 18:32:06}
%TCIDATA{<META NAME="GraphicsSave" CONTENT="32">}
%TCIDATA{<META NAME="DocumentShell" CONTENT="Standard LaTeX\Standard LaTeX Article">}
%TCIDATA{CSTFile=40 LaTeX article.cst}

\newtheorem{theorem}{Theorem}

\newtheorem{definition}[theorem]{Definition}

\newtheorem{lemma}[theorem]{Lemma}

\newtheorem{proposition}[theorem]{Proposition}
\newtheorem{remark}[theorem]{Remark}

\newenvironment{proof}[1][Proof]{\noindent\textbf{#1.} }{\ \rule{0.5em}{0.5em}}

\begin{document}

\title{\textbf{Existence of relaxed optimal control for $G$-neutral stochastic functional differential equations with uncontrolled diffusion}}

\author{\textbf{Nabil Elgroud$^{a}$, Hacene Boutabia$^{a,*}$, Amel Redjil$^{a,\dagger}$, Omar Kebiri$^{b}$ }\\
$^{a}$Department of Mathematics, Badji MokhtarUniversity\\ 
Annaba, 23000 Annaba, Algeria.\\
$^{b}$Institute of Mathematics, Brandenburgische Technische \\
University Cottbus-Senftenberg, 03046 Cottbus, Germany.
}
\date{}
\maketitle

\begin{abstract}
In this paper, we study under refined Lipchitz hypothesis, the 
question of existence and uniqueness of solution of controlled neutral stochastic functional differential equations driven by $G$-Brownian motion 
($G$-NSFDEs in short). An existence of a relaxed optimal control where 
the neutral and diffusion terms do not depend on the control variable 
was the main result of the article. The latter is done by using 
tightness techniques and the weak convergence techniques for each 
probability measure in the set of all possible probabilities of our 
dynamic. A motivation of our work is presented and a Numerical 
analysis for the uncontrolled $G$-NSFDE is given.
\end{abstract}

\textbf{Key words:} \textsf{$G$-neutral stochastic functional differential equations, $G$-expectation, $G$-Brownian motion, $G$-optimal relaxed control, Numerical analysis.}\\
\textbf{MSC2020} 93E20, 60H07, 60H10, 60H30.

\section{Introduction}\label{int1}
Due to the important ambiguous concepts in the study of optimal control
problems in finance under the principles of uncertainty, it appears in
different typical fields that contain incomplete or inaccurate parameters,
especially financial crises and risks resulting from dark fluctuations and
their impact on the movement of asset prices and liquidity in the markets.

The concepts of uncertainty in fluctuations were studied by \cite{peng2007, peng2010}, who established a type of non-linear expectation theory or expectancy theory within the framework of $G$-Brownian motion, and  \cite{denis2006, denis2011}, did that through the capacity theory, and then relied on the $G$-Brownian movement under $G$-expectation to create $G-$stochastic calculus and this is what led both to prove the existence and uniqueness of the stochastic differential equations driven by the $G$-Brownian motion by \cite{gao2009, peng2007}. In addition, \cite{faizullah2016, faizullah2017} studied the existence and uniqueness of neutral stochastic functional differential equations within the framework of the $G$-Brownian motion ($G$-\text{NSFDEs in short}), is given by 
\begin{center}
\begin{equation}
\left\{ 
\begin{array}{l}
d\left[ X\left( t\right) -Q\left( t,X_{t}\right) \right] =b\left(
t,X_{t}\right) dt+\gamma \left( t,X_{t}\right) d\left\langle B\right\rangle
_{t}\ +\sigma \left( t,X_{t}\right) dB_{t}, \quad t\in \left[ 0,T\right] \\ 
X_{0}=\eta ,
\end{array}
\right.    \label{UCOGNSDE}
\end{equation}
\end{center}
where, $\eta \in BC\left( \left[ -\tau ,0\right] ;\mathbb{R}\right) $, and $%
\tau \geq 0,$ $X_{t}=\left\{ X\left( t+\theta \right) :-\tau \leq \theta
\leq 0\right\} $, $\left( B_{t},t\geq 0\right) $ is a one-dimensional $G$-
Brownian motion defined on some space of sublinear expectation $\left(
\Omega ,\mathcal{H},\widehat{E},\mathbb{F}^{\mathcal{P}}\right) ,$ with a
universal filtration $\mathbb{F}^{\mathcal{P}}=\left\{ \widehat{\mathcal{F}}%
_{t}^{\mathcal{P}}\right\} _{t\geq 0}$, and $\left\{ \left\langle
B\right\rangle _{t},\text{ }t\geq 0\right\} $ is the quadratic variation
process of $G$-Brownian motion, $Q$, $b$, $\gamma $, and $\sigma $ are
deterministic functions on $\left[ 0,T\right] \times BC\left( \left[ -\tau ,0%
\right] ;\mathbb{R}\right) $. With what the $G$-expectation permits%
\begin{equation*}
\widehat{E}\left[ .\right] =\underset{\mathbb{P\in }\mathcal{P}}{\sup }E^{%
\mathbb{P}}\left[ .\right],
\end{equation*}%
where $E^{\mathbb{P}}$ is ordinary expectations, and $\mathcal{P}$ is a
tight family of possibly mutually singular probability measures. For more
details see \cite{denis2006, denis2011}. Recently, \cite{biagini2018, hu2018, hu2014, redjil2018} considered an optimal control problem with the uncertainty of $G$-Brownian motion and its quadratic variation $\left\langle B\right\rangle.$ In this paper we consider the following $G$-NFSDE
\begin{center}
\begin{equation}
\left\{ 
\begin{array}{l}
d\left[ X^{u}\left( t\right) -Q\left( t,X_{t}^{u}\right) \right] =b\left(
t,X_{t}^{u},u\left( t\right) \right) dt+\gamma \left( t,X_{t}^{u},u\left(
t\right) \right) d\left\langle B\right\rangle _{t}\ +\sigma \left(
t,X_{t}^{u}\right) dB_{t} \\ 
X_{0}^{u}=\eta ,t\in \left[ 0,T\right]%
\end{array}%
\right.  \label{1.2}
\end{equation}%
\end{center}
where $u\left( .\right) \in \mathbb{A}$ stands for the control variable for
each $t\in \left[ 0,T\right] ,$ and $\mathbb{A}$ is a compact polish space
of $\mathbb{R}.$ Let $\mathcal{P}\left( \mathbb{A}\right) $ denote the space
of probability measures on $\mathcal{B}(\mathbb{A})$, the $\sigma$-algebra
of Borel subsets of the set $\mathbb{A}$ of values taken by the strict
control. The set $\mathcal{U=U}\left( \left[ 0,T\right] \right)$ is a set
of strict controls. The case of a controlled SDE driven by a classical Brownian motion has been treated by different authors, see e.g \cite{ahmed2014, bahlali2006, wei2015}. In this paper, we study under the concepts presented in \cite{peng2007, peng2010} the existence of a relaxed optimal control that minimize the cost functional:
\begin{equation}
\widehat{E}\left[ \int_{0}^{T}\mathcal{L}(t,X_{t}^{u},u\left( t\right)
)\,dt+\Psi (X_{T}^{u})\right],  \label{1.3}
\end{equation}%
The proof is based on the tightness arguments of the distribution of the control problem.

\textbf{Motivation:} To motivate our work let consider a Brownian particle
moving in an unbounded medium. Let $X(t)$ be the position and $Y(t)$ the
velocity of the particle at time $t.$ So The dynamic is represented by

\begin{equation}
X^{\prime }(t)=Y(t)\qquad \qquad mdY(t)=b(t)dt+\sigma d\xi _{t},  \label{1.4}
\end{equation}%
where $m$ is the mass of the particle and $\sigma d\xi _{t}$ is the noise
part of the medium on the particle. According to Boussinesq representation in 
\cite{boussinesq1885}, $b(t)=-hY(t)-Y^{\prime }(t)\sigma \sqrt{\frac{%
hm_{1}^{3}}{\pi }}\int_{0}^{\infty }Y^{\prime 2}\left( s\right) ds,$ which
represents the systematic action of the medium on the particle, where $%
-hY(t)$ is the Stokes friction force at time $t$ and $m$ the apparent
additional mass which is half the mass of the material of the medium ousted
by the body. The $\int_{0}^{\infty }Y^{\prime 2}\left( s\right) ds$ is the
viscous hydrodynamic aftereffect. These models represent a NSFDE in the
classical case.

In reality, it is difficult to estimate exactly the noise parameter $\sigma $, and what we can have as information is only a range interval $[\sigma
_{min},\sigma _{max}]$ where $\sigma$ belongs, and so, the question is to
study the worse-case scenario, which is difficult to analyse it by direct
methods. the worst scenario system can me transformed to a $G$-NSFDE, and if
we want to control the dynamic of the particle subject to some constrain,
this will leads to a stochastic optimal control driven by a $G$-NSFDE.

The rest of the paper is formed as follows. In section $\ref{prl2}$, we introduce some preliminaries which will be used to establish our result. In section $\ref{fp3}$, is related to three topics, first, we are concentrated to introduce the Problem of $G$-NSFDEs relaxed control, secondly, we prove the existence and uniqueness of solution of $G$-NSFDEs with uncontrolled diffusion, we established the existence of a minimizer of the cost functional in third. Finally, we study the approximation of the relaxed control and we prove the existence of relaxed control. The last section is devoted to some numerical analysis.

\section{Preliminaries}\label{prl2}

The main purpose of this section is to introduce some basic notions and
results in $G$-stochastic calculus that are used in the subsequent sections.
More details can be found in \cite{denis2006, denis2011, peng2007, peng2010, soner2011m, soner2011}.\\
We set $\Omega :=\{\omega \in C(\left[ 0,T\right] ,\mathbb{R}):\omega
(0)=0\}$, the space of real valued continuous functions on $\left[ 0,T%
\right]$ such that $\omega (0)=0,$\ equipped with the following distance

\begin{equation*}
d\left( w^{1},w^{2}\right) :=\sum_{N=1}^{\infty }2^{-N}\left( \left( 
\underset{0\leq t\leq N}{\max }\left\vert w_{t}^{1}-w_{t}^{2}\right\vert
\right) \wedge 1\right),
\end{equation*}
$\Omega _{t}:=\{w_{.\wedge t}:w\in \Omega \}$, $B_{t}\left( w\right) =w_{t},
t\geq 0$ the canonical process on $\Omega $ and let $\mathbb{F}:=(\mathcal{F}%
_{t}\,)_{\,t\geq 0}$ be the natural filtration generated by $%
(B_{t}\,)_{\,t\geq 0}$. Moreover, we set, for each $t\in \left[
0,\infty\right) $%
\begin{eqnarray*}
\mathcal{F}_{t+} &:&=\cap _{s>t}\mathcal{F}_{s}, \\
\mathbb{F^{+}} &:&=(\mathcal{F}_{t^{+}}\,)_{\,t\geq 0}, \\
\mathcal{F}_{t}^{\mathbb{P}} &:&=\mathcal{F}_{t+}\vee \mathcal{N}^{\mathbb{P}%
}(\mathcal{F}_{t+}), \\
\widehat{\mathcal{F}}_{t}^{\mathbb{P}} &:&=\mathcal{F}_{t+}\vee \mathcal{N}^{%
\mathbb{P}}(\mathcal{F}_{\infty }),
\end{eqnarray*}

where $\mathcal{N}^{\mathbb{P}}(\mathcal{G})$ is a $\mathbb{P}$-negligible
set on a $\sigma $-algebra $\mathcal{G}$ given by%
\begin{equation*}
\mathcal{N}^{\mathbb{P}}(\mathcal{G}):=\{D\subset \Omega :\text{there\ exists%
}\ \widetilde{D}\in \mathcal{G}\ \ \text{such\ that}\ D\subset \widetilde{D}%
\ \text{\ and}\ \mathbb{P}[\widetilde{D}]=0\},
\end{equation*}

where $\mathbb{P}$ is a probability measure on the Borel $\sigma $-algebra $%
\mathcal{B}(\Omega )$ of $\Omega $. Consider the following spaces: for $
0\leq t\leq T$%
\begin{eqnarray*}
Lip(\Omega _{t}) &:&=\left\{ \varphi (B_{t_{1}},...,B_{t_{n}}):\varphi \in
C_{b,Lip}(\mathbb{R}^{n})\text{ and }t_{1},t_{2},...,t_{n}\in \left[ 0,t%
\right] \right\}, \\
Lip(\Omega ) &:&=\underset{n\in \mathbb{N}}{\cup }Lip(\Omega _{n}),
\end{eqnarray*}
where $C_{b,Lip}(\mathbb{R}^{n})$ is the space of bounded and Lipschitz on $%
\mathbb{R}^{n}.$ Let $T>0$ be a fixed time.

\cite{peng2007} has constructed the $G$-expectation $\widehat{E}$ 
$:\mathcal{H}:=Lip(\Omega _{T})\longrightarrow \mathbb{R}$ which is a
consistent sublinear expectation on the lattice $\mathcal{H}$ of real
functions $i.e.$ it satisfies:

\begin{enumerate}
\item Sub-additivity: $\widehat{E}[X+Y]\leq \widehat{E}[X]+\widehat{E}[Y],$
for all $X,Y\in \mathcal{H},\quad $

\item Monotonicity: $X\geq Y\Rightarrow \widehat{E}[X]\geq \widehat{E}[Y],$
for all $X,Y\in \mathcal{H},$

\item Constant preserving: $\widehat{E}[c]=c,$ for all $c\in \mathbb{R},$

\item Positive homogeneity: $\quad \widehat{E}[\lambda X]=\lambda \widehat{E}%
[X],$ for all $\lambda \geq 0,\,X\in \mathcal{H},$
\end{enumerate}
The triple $\left( \Omega ,\mathcal{H},\widehat{E}\right )$ is said to be
sub-linear expectation space, if \textbf{1} and \textbf{2} are only
satisfied. Moreover, $\widehat{E}\left[ .\right] $ is called a nonlinear
expectation and the triple $\left( \Omega ,\mathcal{H},\widehat{E}\right)$
is called a nonlinear expectation space.

we assume that, if $Y=(Y_{1},...,Y_n),Y_{i}$ $\in \mathcal{H}$, then $%
\varphi(Y_{1},...,Y_n)\in \mathcal{H}$ for all $\varphi $ $\in C_{b,Lip}(%
\mathbb{R}^{n}).$

\begin{definition}
A random vector $Y=(Y_{1},...,Y_{n})$ is said to be independent from another
random vector $X=(X_{1},...,X_{m})$ under $\widehat{E}$ if for any $%
\varphi\in C_{b,Lip}(\mathbb{R}^{n+m})$%
\begin{equation*}
\widehat{E}\left[ \varphi \left( X,Y\right) \right] =\widehat{E}\left[ 
\widehat{E}\left[ \varphi \left( x,Y\right) \right] _{x=X}\right].
\end{equation*}
\end{definition}

\begin{definition}
A process $X$ on $\left( \Omega ,\mathcal{H},\widehat{E}\right) $ is said to
be $G$-normally distributed under the $G$-expectation $\widehat{E}[\cdot ]$
if \ for any $\varphi $ $\in C_{b,Lip}(\mathbb{R})$\ the function%
\begin{equation*}
u\left( t,x\right) :=\widehat{E}\left[ \varphi \left( x+\sqrt{t}X\right) %
\right] ,\left( t,x\right) \in \left[ 0,T\right] \times \mathbb{R},
\end{equation*}%
is the unique viscosity solution of the parabolic equation%
\begin{equation*}
\left\{ 
\begin{array}{c}
\frac{\partial u}{\partial t}=G\left( u_{xx}\right) \\ 
u\left( 0,x\right) =\varphi \left( x\right)
\end{array}%
\right.
\end{equation*}%
where t\textit{he nonlinear function }$G$\textit{\ is defined by }$G(a):=$%
\textit{\ }$\frac{1}{2}\widehat{E}\left[ aX^{2}\right] =\frac{1}{2}\left( 
\overline{\sigma }^{2}a^{+}-\underline{\sigma }^{2}a^{-}\right) ,$\textit{\ }%
$a\in \mathbb{R},$\textit{with }$\overline{\sigma }^{2}:=$\textit{\ }$%
\widehat{E}\left[ X^{2}\right] $\textit{, }$\underline{\sigma }^{2}:=-%
\widehat{E}\left[ -X^{2}\right] ,$\textit{\ }$a^{+}$\textit{=}$\max \left\{
0,a\right\} $\textit{\ and }$a^{-}$\textit{= -}$\min \left\{ 0,a\right\} $%
\textit{. This }$G$-\textit{normal distribution is denoted by }$\mathcal{N}%
(0,$\textit{\ }$\left[ \underline{\sigma }^{2},\text{ }\overline{\sigma }^{2}%
\right] )$\textit{. }
\end{definition}

\begin{definition}
$\left( \text{\textbf{G-Brownian Motion}}\right) $The canonical process $%
\left( B_{t}\right) _{t\geq 0}$ on $\left( \Omega ,\text{ }\mathcal{H},\text{%
}\widehat{E}\right) $ is called a $G$-Brownian motion if the following
properties are satisfied:

\begin{itemize}

\item
$B_{0}=0.$

\item
\textit{For each} $t,s\geq 0$ \textit{the increment%
} $B_{t+s}-B_{t}$ \ is $\mathcal{N}(0,$ $\left[ s\underline{\sigma }^{2},%
\text{ }s\overline{\sigma }^{2}\right] )$-distributed$.$

\item
$B_{t_{1}},B_{t_{2}},...,B_{t_{n}}$ is \textit{%
independent of} $B_{t},$ \textit{for\ }$n\geq 1$ \textit{and} $%
t_{1},t_{2},...,t_{n}\in \left[ 0,t\right] .$
\end{itemize}
\end{definition}

For $p\geq 1,$ we denote by $L_{G}^{p}(\Omega _{T})$ the completion of $%
Lip(\Omega _{T})$ under the natural norm%
\begin{equation*}
\Vert X\Vert _{L_{G}^{p}(\Omega _{T})}^{p}:=\widehat{E}[|X|^{p}],
\end{equation*}

and define the space $M_{G}^{0,p}(0,T)$ of $\mathbb{F}$-progressively
measurable, $\mathbb{R}$-valued simple processes of the form%
\begin{equation*}
\eta (t)=\eta (t,w)=\sum_{i=0}^{n-1}\xi _{t_{i}}\left( w\right) \mathbb{
\nparallel }_{[t_{i},t_{i+1})}(t)\text{ ,}
\end{equation*}

where $\left\{ t_{0},\cdots ,t_{n}\right\} $ is a subdivision of $\left[ 0,T%
\right]$. Denote by $M_{G}^{p}(0,T)$ the closure of $M_{G}^{0,p}(0,T)$ with
respect to the norm%
\begin{equation*}
\Vert \eta \Vert _{M_{G}^{p}(0,T)}^{p}:=\hat{\mathbb{E}}[\int_{0}^{T}|\eta
(t)|^{p}ds].
\end{equation*}

Note that $M_{G}^{q}(0,T)\subset M_{G}^{p}(0,T)$ if $1\leq p<q$. For each $%
t\geq 0$, let $L^{0}(\Omega _{t})$ be the set of $F_{t}$-measurable
functions. We set%
\begin{equation*}
Lip(\Omega _{t}):=Lip(\Omega )\cap L^{0}(\Omega _{t}),\quad
L_{G}^{p}(\Omega_{t}):=L_{G}^{p}(\Omega )\cap L^{0}(\Omega _{t}).
\end{equation*}

For each $\eta \in M_{G}^{0,2}(0,T)$, the related It\^{o} integral of $%
\left( B_{t}\right) _{t\geq 0}$ is defined by%
\begin{equation*}
I(\eta )=\int_{0}^{T}\eta \left( s\right) dB_{s}:=\sum_{j=0}^{N-1}\eta
_{j}(B_{t_{j+1}}-B_{t_{j}}),
\end{equation*}

where the mapping $I:\,M_{G}^{0,2}(0,T)\rightarrow L_{G}^{2}(\Omega _{T})$is
continuously extended to $M_{G}^{2}(0,T).$ The quadratic variation process $%
\langle B\rangle _{t}$ of $\left( B_{t}\right) _{t\geq 0},$ defined by%
\begin{equation}
\langle B\rangle _{t}:=B_{t}^{2}-2\int_{0}^{t}B_{s}dB_{s}  \label{2.1}
\end{equation}%
\ \ 

For each $\eta \in M_{G}^{0,1}(0,T)$, Let the mapping $\mathcal{J}%
_{0,T}\left( \eta \right) :M_{G}^{0,1}(0,T)\mapsto \mathbb{L}%
_{G}^{1}(\Omega_{T})$ given by:

\begin{equation*}
\mathcal{J}_{0,T}\left( \eta \right) =\int_{0}^{T}\eta \left( t\right)
d\langle B\rangle _{t}:=\sum_{j=0}^{N-1}\xi _{j}(\langle B\rangle
_{t_{j+1}}-\langle B\rangle _{t_{j}}).
\end{equation*}

Then $\mathcal{J}_{0,T}\left( \eta \right) $ can be extended continuously to 
\begin{equation*}
\mathcal{J}_{0,T}\left( \eta \right) :M_{G}^{1}(0,T)\rightarrow \mathbb{L}%
_{G}^{1}(\Omega _{T}).
\end{equation*}

\begin{lemma}
(\cite{peng2010}) We have for each $p\geq 1$

\begin{equation*}
\widehat{E}\left[ \int_{0}^{T}\eta (t)d\langle B\rangle _{t}\right] \leq 
\overline{\sigma }^{2}\widehat{E}\left[ \int_{0}^{T}\left\vert \eta
(t)\right\vert dt\right] ,\text{ for each }\eta \in M_{G}^{1}(0,T).
\end{equation*}%
\newline
\begin{equation*}
\widehat{E}\left[ \left( \int_{0}^{T}\eta (t)dB_{t}\right) ^{2}\right] =%
\widehat{E}\left[ \int_{0}^{T}\eta ^{2}(t)d\langle B\rangle _{t}\right] ,%
\text{ for each }\eta \in M_{G}^{2}(0,T)\text{ }\left( \text{isometry}%
\right) .
\end{equation*}%
\newline
\begin{equation*}
\widehat{E}\left[ \int_{0}^{T}\left\vert \eta (t)\right\vert ^{p}dt\right]
\leq \int_{0}^{T}\widehat{E}\left[ \left\vert \eta (t)\right\vert ^{p}\right]
dt,\text{ for each }\eta \in M_{G}^{p}(0,T).
\end{equation*}
\end{lemma}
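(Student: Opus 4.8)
The plan is to prove each of the three relations first for elementary processes in $M_G^{0,p}(0,T)$, where the $G$-It\^o integral $I(\eta)$ and the integral $\mathcal{J}_{0,T}(\eta)$ against $\langle B\rangle$ reduce to finite sums, and then to pass to general $\eta$ by the density of $M_G^{0,p}(0,T)$ in $M_G^p(0,T)$ together with the continuity of the maps $I$ and $\mathcal{J}_{0,T}$ recalled above. Throughout I would lean on the four sublinearity axioms of $\widehat{E}$ and on the elementary increment behaviour of the quadratic variation, namely that each increment $\langle B\rangle_{t_{i+1}}-\langle B\rangle_{t_i}$ is nonnegative and is dominated by $\overline{\sigma}^2(t_{i+1}-t_i)$ in (conditional) $G$-expectation, which follows from the $G$-normal law and the independence of increments in the definition of $G$-Brownian motion.

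For the first inequality, I would take $\eta=\sum_i \xi_{t_i}\mathbf{1}_{[t_i,t_{i+1})}$ simple, so that $\int_0^T \eta\,d\langle B\rangle=\sum_i \xi_{t_i}(\langle B\rangle_{t_{i+1}}-\langle B\rangle_{t_i})$. Splitting $\xi_{t_i}=\xi_{t_i}^+-\xi_{t_i}^-$ and using that $\langle B\rangle$ is increasing, each summand is bounded above by $|\xi_{t_i}|(\langle B\rangle_{t_{i+1}}-\langle B\rangle_{t_i})$; monotonicity, sub-additivity and the increment bound $\overline{\sigma}^2(t_{i+1}-t_i)$ then give $\widehat{E}[\int_0^T \eta\,d\langle B\rangle]\le \overline{\sigma}^2\widehat{E}[\int_0^T|\eta|\,dt]$, which the continuity of $\mathcal{J}_{0,T}$ on $M_G^1(0,T)$ propagates to all $\eta$. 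The third inequality is the most direct, being exactly the integral form of sub-additivity: for simple $\eta$ one has $\int_0^T|\eta(t)|^p dt=\sum_i |\xi_{t_i}|^p(t_{i+1}-t_i)$, and sub-additivity with positive homogeneity yields $\widehat{E}[\sum_i |\xi_{t_i}|^p(t_{i+1}-t_i)]\le \sum_i (t_{i+1}-t_i)\widehat{E}[|\xi_{t_i}|^p]=\int_0^T \widehat{E}[|\eta(t)|^p]\,dt$, after which a limiting argument extends it to $M_G^p(0,T)$.

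The isometry in the second statement is the main obstacle, because the sublinearity of $\widehat{E}$ prevents the naive expansion of the square from killing the cross terms the way the classical Itô isometry does. The structural fact I would invoke is that, for simple $\eta$, the process $N_t:=(\int_0^t \eta\,dB)^2-\int_0^t \eta^2\,d\langle B\rangle$ is a \emph{symmetric} $G$-martingale, in the sense that both $\widehat{E}[\,\cdot\mid\mathcal{F}_s]$ and $-\widehat{E}[-\,\cdot\mid\mathcal{F}_s]$ preserve it; this is the counterpart of the defining identity $\langle B\rangle_t=B_t^2-2\int_0^t B_s\,dB_s$. Granting it, $N_0=0$ forces both $\widehat{E}[N_T]=0$ and $\widehat{E}[-N_T]=0$. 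The first relation together with sub-additivity gives $\widehat{E}[(\int_0^T\eta\,dB)^2]\le \widehat{E}[\int_0^T\eta^2\,d\langle B\rangle]$, while the second gives the reverse inequality, so equality holds; I would then extend from simple to general $\eta\in M_G^2(0,T)$ via the continuity of $I$ and $\mathcal{J}_{0,T}$, using $M_G^2(0,T)\subset M_G^1(0,T)$.

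As a cleaner alternative, I would note that all three follow transparently from the Denis--Hu--Peng representation $\widehat{E}[\cdot]=\sup_{\mathbb{P}\in\mathcal{P}}E^{\mathbb{P}}[\cdot]$: under each $\mathbb{P}$ the canonical process is a continuous martingale whose quadratic-variation density takes values in $[\underline{\sigma}^2,\overline{\sigma}^2]$, so the pathwise bound $\eta\,d\langle B\rangle\le \overline{\sigma}^2|\eta|\,dt$ gives the first inequality, ordinary Fubini gives the third, and the classical Itô isometry $E^{\mathbb{P}}[(\int_0^T\eta\,dB)^2]=E^{\mathbb{P}}[\int_0^T\eta^2\,d\langle B\rangle]$, valid for every single $\mathbb{P}$, yields the second upon taking the supremum over $\mathbb{P}$ on both sides. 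The delicate point in this route is only the identification of the $G$-integral with the classical one $\mathbb{P}$-almost surely for each $\mathbb{P}$, which is where the density/continuity argument is still needed.
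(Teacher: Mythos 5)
The paper states this lemma purely as a preliminary imported from Peng (2010) and gives no proof of its own, so there is no internal argument to compare yours against; I can only assess the proposal on its merits. Your overall strategy --- establish the three relations for simple processes and pass to the limit via the density of $M_G^{0,p}(0,T)$ in $M_G^{p}(0,T)$ and the continuity of $I$ and $\mathcal{J}_{0,T}$, or alternatively read everything off the representation $\widehat{E}[\cdot]=\sup_{\mathbb{P}\in\mathcal{P}}E^{\mathbb{P}}[\cdot]$ --- is the standard one and is sound. The third inequality is handled correctly (sub-additivity plus positive homogeneity termwise, then a limit). For the isometry, the symmetric-$G$-martingale property of $N_t$ is the right structural fact and does need to be justified, but for simple $\eta$ it follows from $N_T=2\int_0^T\bigl(\int_0^s\eta\,dB\bigr)\eta(s)\,dB_s$ together with $\widehat{E}\bigl[\pm\int_0^T\zeta\,dB\bigr]=0$ for $\zeta\in M_G^2(0,T)$; your two one-sided applications of sub-additivity then close the equality, and the representation-theorem alternative (classical It\^{o} isometry under each $\mathbb{P}$, then take suprema of both sides of an equality) is equally valid once the $G$-integral is identified with the $\mathbb{P}$-It\^{o} integral.

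The one step that does not close as written is the first inequality in your primary route. You bound each summand by $|\xi_{t_i}|(\langle B\rangle_{t_{i+1}}-\langle B\rangle_{t_i})$ and then invoke the increment bound ``in (conditional) $G$-expectation'' together with sub-additivity. That chain gives $\widehat{E}\bigl[\sum_i|\xi_{t_i}|(\langle B\rangle_{t_{i+1}}-\langle B\rangle_{t_i})\bigr]\le\overline{\sigma}^2\sum_i(t_{i+1}-t_i)\widehat{E}[|\xi_{t_i}|]$, but sub-additivity relates this last sum to the target quantity in the wrong direction: it yields $\widehat{E}\bigl[\int_0^T|\eta(t)|\,dt\bigr]\le\sum_i(t_{i+1}-t_i)\widehat{E}[|\xi_{t_i}|]$, not the reverse, so you cannot conclude $\le\overline{\sigma}^2\widehat{E}[\int_0^T|\eta(t)|\,dt]$ from there. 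The fix is exactly the ingredient you already use in your ``cleaner alternative'': the increment bound holds pathwise, $\langle B\rangle_{t_{i+1}}-\langle B\rangle_{t_i}\le\overline{\sigma}^2(t_{i+1}-t_i)$ quasi-surely (equivalently $d\langle B\rangle_t/dt\le\overline{\sigma}^2$ q.s., a fact the paper itself records in its BDG-type proposition), whence $\int_0^T\eta(t)\,d\langle B\rangle_t\le\overline{\sigma}^2\int_0^T|\eta(t)|\,dt$ q.s. and monotonicity of $\widehat{E}$ finishes the simple-process case with no sub-additivity needed. With that substitution the proposal is correct.
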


\begin{proposition}
(\cite{denis2006}) \ For each $\xi \in {\mathbb{L}}_{G}^{1}(\Omega ).$There
exists a weakly compact family of probability measures $\mathcal{P}$ on $%
(\Omega ,$ $\mathcal{B}(\Omega ))$ such that%
\begin{equation*}
\widehat{E}[\xi ]=\sup_{\mathbb{P}\in \mathcal{P}}E^{\mathbb{P}}[\xi ].
\end{equation*}%
Then, we define the associated regular choquet capacity related to $\mathbb{P
}$: 
\begin{equation*}
c(C):=\sup_{\mathbb{P}\in \mathcal{P}}\mathbb{P}(C),\quad C\in \mathcal{B}%
(\Omega ).
\end{equation*}
\end{proposition}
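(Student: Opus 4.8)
The plan is to realize $\widehat{E}$ as an upper expectation in three stages: (i) produce, by a Hahn--Banach argument, a family of linear functionals dominated by $\widehat{E}$ whose supremum recovers $\widehat{E}$; (ii) upgrade these functionals to genuine countably additive probability measures on the Polish space $\Omega$; and (iii) show the resulting family is weakly compact, then extend the identity from a dense subclass to all of $\mathbb{L}_G^1(\Omega)$.

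For stage (i) I would work first on the lattice $\mathcal{H}=Lip(\Omega_T)$, or on its sup-norm closure inside $C_b(\Omega)$, where $\widehat{E}$ is a sublinear functional that is monotone and constant-preserving (properties 1--4). Fixing $\xi$, the sublinear map $\widehat{E}$ dominates the linear functional it induces on the line $\mathbb{R}\xi$, and Hahn--Banach extends this to a linear $\ell_\xi$ on all of $\mathcal{H}$ with $\ell_\xi\le\widehat{E}$ and $\ell_\xi(\xi)=\widehat{E}(\xi)$. Collecting $\mathcal{L}:=\{\ell\ \text{linear}:\ell\le\widehat{E}\}$ yields a convex set with $\widehat{E}=\sup_{\ell\in\mathcal{L}}\ell$. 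Monotonicity forces each $\ell\in\mathcal{L}$ to be positive, and constant preservation forces $\ell(1)=1$, so every $\ell$ is a \emph{finitely additive} normalized positive functional, i.e. a finitely additive probability.

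Stage (ii) is the crux. The obstacle is that Hahn--Banach delivers only finitely additive functionals; to obtain $\sigma$-additive (Radon) measures one needs a regularity property. I would prove that $\widehat{E}$ is continuous from above on $C_b(\Omega)$: if $\xi_n\in C_b(\Omega)$ with $\xi_n\downarrow 0$ pointwise, then $\widehat{E}[\xi_n]\downarrow 0$. This is where the structure of $G$-Brownian motion enters. Uniform moment estimates for the increments of $(B_t)$, controlled through the inequalities of the Lemma quoted from \cite{peng2010}, yield for every $\varepsilon>0$ a compact $K_\varepsilon\subset\Omega$ with $\widehat{E}[\mathbf{1}_{K_\varepsilon^c}]$ arbitrarily small, i.e. tightness at the level of the capacity $c$; combined with Dini's theorem on each $K_\varepsilon$ this gives continuity from above. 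Each $\ell\in\mathcal{L}$ then inherits continuity from above, and the Daniell--Stone representation theorem on the Polish space $\Omega$ identifies it with a countably additive Radon probability measure $\mathbb{P}$. Thus $\mathcal{P}:=\{\mathbb{P}:E^{\mathbb{P}}\le\widehat{E}\ \text{on}\ C_b(\Omega)\}$ consists of genuine probability measures and satisfies $\widehat{E}[\xi]=\sup_{\mathbb{P}\in\mathcal{P}}E^{\mathbb{P}}[\xi]$ for $\xi\in C_b(\Omega)$.

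For stage (iii) the same tightness estimate shows $\mathcal{P}$ is tight, so by Prokhorov's theorem it is relatively weakly compact; since $\mathcal{P}$ is cut out by the weakly closed conditions $E^{\mathbb{P}}[\xi]\le\widehat{E}[\xi]$ for $\xi\in C_b(\Omega)$, it is also weakly closed, hence weakly compact. Finally I would extend the representation to all $\xi\in\mathbb{L}_G^1(\Omega)$ by density: the bounded continuous elements of $\mathcal{H}$ are dense in $\mathbb{L}_G^1$, and both $\xi\mapsto\widehat{E}[\xi]$ and $\xi\mapsto\sup_{\mathbb{P}}E^{\mathbb{P}}[\xi]$ are $1$-Lipschitz for the $\mathbb{L}_G^1$-norm (the latter because $E^{\mathbb{P}}\le\widehat{E}$ controls the approximation error), so the identity passes to the limit; the Choquet capacity $c(C)=\sup_{\mathbb{P}\in\mathcal{P}}\mathbb{P}(C)$ is then well defined and regular because $\mathcal{P}$ is weakly compact. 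The single most delicate point, and the one I would spend the most care on, is the continuity-from-above/tightness step of stage (ii), since this is precisely what turns the soft Hahn--Banach family into $\sigma$-additive measures.
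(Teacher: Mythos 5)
The paper itself offers no proof of this proposition; it is quoted verbatim from \cite{denis2006}, so your attempt can only be measured against the argument in that reference. Your three-stage architecture --- Hahn--Banach to produce a family of dominated linear functionals, an upgrade to countably additive measures, then Prokhorov plus a density argument --- is exactly the abstract representation theorem that Denis, Hu and Peng establish for a general regular sublinear expectation on $C_b(\Omega)$, and stages (i) and (iii) are correct as you state them: positivity and normalization of the Hahn--Banach functionals do follow from monotonicity and constant preservation, and weak closedness plus tightness does give weak compactness of $\mathcal{P}$.

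The gap is in stage (ii), precisely the step you single out as delicate. As written, the claim that uniform moment estimates for the increments of $(B_t)$ yield a compact $K_\varepsilon\subset\Omega$ with $\widehat{E}[\mathbf{1}_{K_\varepsilon^c}]$ small, ``i.e.\ tightness at the level of the capacity $c$,'' is circular: the capacity $c$ is only defined once the family $\mathcal{P}$ has been produced, and a Kolmogorov-type tightness criterion applied directly to $\widehat{E}$ would require evaluating $\widehat{E}$ on suprema over uncountable time sets (moduli of continuity of the path), which are not a priori in $Lip(\Omega_T)$ or its $L^1_G$-completion --- showing that they are is essentially equivalent to the representation you are trying to prove. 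The reference closes this loop differently: it first exhibits the concrete family $\mathcal{P}_M$ of laws of $\int_0^{\cdot}\theta_s\,dW_s$ with $\theta$ adapted and valued in $[\underline{\sigma},\overline{\sigma}]$, for which tightness is a classical Kolmogorov estimate holding uniformly measure by measure; it then proves $\widehat{E}=\sup_{\mathbb{P}\in\mathcal{P}_M}E^{\mathbb{P}}$ on $Lip(\Omega_T)$ via the dynamic-programming representation of the $G$-heat equation, and only afterwards obtains continuity from above (hence the Daniell--Stone step) and the extension to $L^1_G(\Omega)$. Your outline therefore needs either this concrete family or some other non-circular source of tightness before stage (ii) goes through; the rest of the argument is sound.
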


\begin{definition}
\textit{A set }$C\in B(\Omega )$\textit{\ is polar if }$c(C)$\textit{\ }$=0$%
\textit{\ or equivalently if }$\,\mathbb{P}(C)=0$\textit{\ for all }$\mathbb{%
P}\in \mathcal{P}.$ A property holds quasi surely ( $q.s.$ in short) if it
holds outside a polar set.
\end{definition}

Let define $\mathcal{N}_{\mathcal{P}}$ the $\mathcal{P}$-polar sets, as
follow%
\begin{equation*}
\mathcal{N}_{\mathcal{P}}:=\bigcap_{\mathbb{P}\in \mathcal{P}}\mathcal{N}^{%
\mathbb{P}}(\mathcal{F}_{\infty }).
\end{equation*}

We must use the following universal filtration $\mathbb{F}^{\mathcal{P}}$
for the possibly mutually singular probability measures $\mathbb{P},\mathbb{P}\in \mathcal{P}$ in \cite{soner2011}.%
\begin{eqnarray*}
\mathbb{F}^{\mathcal{P}} &:&=\{\widehat{\mathcal{F}}_{t}^{\mathcal{P}%
}\}_{t\geq 0},\quad \quad \\
\quad \widehat{\mathcal{F}}_{t}^{\mathcal{P}} &:&=\bigcap_{\mathbb{P}\in 
\mathcal{P}}(\mathcal{F}_{t}^{\mathbb{P}}\vee \mathcal{N}_{\mathcal{P}%
})\quad \text{for}\quad t\geq 0.
\end{eqnarray*}

In view of the dual formulation of the $G$-expectation, we end this section
by the following Burkholder-Davis-Gundy-type estimates, formulated in one
dimension.

\begin{proposition}
(\cite{gao2009})
\begin{itemize}

\item 
\textit{For each }$p\geq 2$\textit{\ and }$\eta\in
M_{G}^{p}(0,T)$\textit{, then there exists some constant }$C_{p}$\textit{\
depending only on }$p$\textit{\ and }$T$\textit{\ such that} 
\begin{equation*}
\widehat{E}\left[ \sup_{s\leq u\leq t}\left\vert \int_{s}^{u}\eta
_{r}dB_{r}\right\vert ^{p}\right] \leq C_{p}|t-s|^{\frac{p}{2}-1}\int_{s}^{t}
\widehat{E}[|\eta _{r}|^{p}|]dr.
\end{equation*}

\item
\textit{For each }$p\geq 1$\textit{\ and }$\eta
\in M_{G}^{p}(0,T),$\textit{\ then there exists a positive constant }$\bar{%
\sigma}$\textit{\ such that }$\frac{d\langle B\rangle _{t}}{dt}\leq \bar{%
\sigma}$\textit{\ }$q.s.$, \textit{\ we have} 
\begin{equation*}
\widehat{E}\left[ \sup_{s\leq u\leq t}\left\vert \int_{s}^{u}\eta
_{r}d\langle B\rangle _{r}\right\vert ^{p}\right] \leq \bar{\sigma}%
^{p}|t-s|^{p-1}\int_{s}^{t}\hat{\mathbb{E}}[|\eta _{r}|^{p}]dr.
\end{equation*}
\end{itemize}
\end{proposition}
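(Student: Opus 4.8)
The plan is to exploit the dual representation $\widehat{E}[\cdot]=\sup_{\mathbb{P}\in\mathcal{P}}E^{\mathbb{P}}[\cdot]$ from the earlier Proposition, reducing each inequality to a classical estimate that holds uniformly in $\mathbb{P}\in\mathcal{P}$. The key structural facts I would invoke are that, for each $\mathbb{P}\in\mathcal{P}$, the canonical process $B$ is a continuous $\mathbb{P}$-martingale whose $\mathbb{P}$-quadratic variation coincides $\mathbb{P}$-a.s. with the intrinsic process $\langle B\rangle$, and that the $G$-Itô integral $\int_s^u\eta_r\,dB_r$ agrees $\mathbb{P}$-a.s. with the classical Itô integral of $\eta$ against $B$ under $\mathbb{P}$. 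Granting these, the two parts become applications of the classical Burkholder--Davis--Gundy inequality and of H\"older's inequality, followed by an interchange of $\sup_{\mathbb{P}}$ with the time integral.

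For the first estimate, fix $\mathbb{P}\in\mathcal{P}$ and set $M_u:=\int_s^u\eta_r\,dB_r$, a continuous $\mathbb{P}$-martingale with $\langle M\rangle_u=\int_s^u\eta_r^2\,d\langle B\rangle_r$. Since $\tfrac{d\langle B\rangle_r}{dr}\le\bar\sigma^2$ quasi-surely, hence $\mathbb{P}$-a.s., I would bound $\langle M\rangle_t\le\bar\sigma^2\int_s^t\eta_r^2\,dr$, then apply the classical BDG inequality to get $E^{\mathbb{P}}[\sup_{s\le u\le t}|M_u|^p]\le c_p\bar\sigma^p E^{\mathbb{P}}[(\int_s^t\eta_r^2\,dr)^{p/2}]$. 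For $p\ge2$, H\"older's inequality in the form $(\int_s^t\eta_r^2\,dr)^{p/2}\le|t-s|^{p/2-1}\int_s^t|\eta_r|^p\,dr$ together with Fubini's theorem yields $E^{\mathbb{P}}[\sup_{s\le u\le t}|M_u|^p]\le c_p\bar\sigma^p|t-s|^{p/2-1}\int_s^t E^{\mathbb{P}}[|\eta_r|^p]\,dr$. Taking the supremum over $\mathbb{P}$ and using $\sup_{\mathbb{P}}\int_s^t E^{\mathbb{P}}[|\eta_r|^p]\,dr\le\int_s^t\sup_{\mathbb{P}}E^{\mathbb{P}}[|\eta_r|^p]\,dr=\int_s^t\widehat{E}[|\eta_r|^p]\,dr$ gives the claim with $C_p:=c_p\bar\sigma^p$.

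The second estimate is purely pathwise and needs no martingale theory. Using $\tfrac{d\langle B\rangle_r}{dr}\le\bar\sigma$ quasi-surely, for every $u\in[s,t]$ I would bound $|\int_s^u\eta_r\,d\langle B\rangle_r|\le\int_s^t|\eta_r|\,d\langle B\rangle_r\le\bar\sigma\int_s^t|\eta_r|\,dr$, so that $\sup_{s\le u\le t}|\int_s^u\eta_r\,d\langle B\rangle_r|^p\le\bar\sigma^p(\int_s^t|\eta_r|\,dr)^p$. H\"older's inequality $(\int_s^t|\eta_r|\,dr)^p\le|t-s|^{p-1}\int_s^t|\eta_r|^p\,dr$, valid for $p\ge1$, followed by taking $\widehat{E}$ and invoking the last inequality of the preceding Lemma, namely $\widehat{E}[\int_s^t|\eta_r|^p\,dr]\le\int_s^t\widehat{E}[|\eta_r|^p]\,dr$, delivers the bound $\bar\sigma^p|t-s|^{p-1}\int_s^t\widehat{E}[|\eta_r|^p]\,dr$.

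The main obstacle is the first part's justification of the pathwise identification of the $G$-Itô integral with the classical one under each $\mathbb{P}$, together with the measurability needed to apply BDG and Fubini uniformly across the (possibly mutually singular) measures in $\mathcal{P}$; once this representation is secured the remaining steps are standard. The interchange $\sup_{\mathbb{P}}\int\le\int\sup_{\mathbb{P}}$ is the only other point requiring care, but it follows directly from the pointwise-in-$r$ bound $E^{\mathbb{P}}[|\eta_r|^p]\le\widehat{E}[|\eta_r|^p]$.
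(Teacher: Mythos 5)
The paper offers no proof of this proposition: it is quoted directly from the cited reference (Gao 2009), so there is nothing internal to compare your argument against. Your derivation is correct and is essentially the standard one for such $G$-BDG estimates: reduce to each $\mathbb{P}\in\mathcal{P}$ via the dual representation $\widehat{E}=\sup_{\mathbb{P}\in\mathcal{P}}E^{\mathbb{P}}$, use that $B$ is a continuous $\mathbb{P}$-martingale whose quadratic variation has density bounded by $\bar{\sigma}^{2}$, apply the classical BDG and H\"older inequalities, and interchange $\sup_{\mathbb{P}}$ with the time integral via the pointwise bound $E^{\mathbb{P}}[|\eta_{r}|^{p}]\leq\widehat{E}[|\eta_{r}|^{p}]$. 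The second estimate is indeed purely pathwise, exactly as you say. The two delicate points you flag are the right ones: the $\mathbb{P}$-a.s.\ identification of the $G$-It\^{o} integral with the classical It\^{o} integral under each (possibly mutually singular) $\mathbb{P}\in\mathcal{P}$, and the applicability of the representation $\widehat{E}[\xi]=\sup_{\mathbb{P}}E^{\mathbb{P}}[\xi]$ to $\xi=\sup_{s\leq u\leq t}|\int_{s}^{u}\eta_{r}dB_{r}|^{p}$ (which requires $\eta\in M_{G}^{p}$ so that this random variable is quasi-continuous and lies in $L_{G}^{1}$); both are standard facts from the quasi-sure analysis literature but should be cited rather than asserted. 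Two cosmetic remarks: your constant comes out as $c_{p}\bar{\sigma}^{p}$, i.e.\ depending on $p$ and $\bar{\sigma}$ rather than on $p$ and $T$ as the statement (somewhat oddly) says, which is harmless; and the statement's second bullet bounds $\frac{d\langle B\rangle_{t}}{dt}$ by $\bar{\sigma}$ while the first part of your argument uses the more usual bound $\bar{\sigma}^{2}$ --- you have simply followed the paper's own notational inconsistency, which is worth noting but not a flaw in your reasoning.
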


\section{\protect\bigskip Formulation of the problem}\label{fp3}
We study the existence of optimal control problem for $G$-NSFDEs, given the
following integral equation%
\begin{eqnarray}
X\left( t\right) &=&\eta \left( 0\right) +Q\left( t,X_{t}\right) -Q\left(
0,\eta \right) +\int_{0}^{t}b\left( s,X_{s},u\left( s\right) \right) ds\ \ 
\notag \\
&&+\int_{0}^{t}\gamma \left( s,X_{s},u\left( s\right) \right) d\left\langle
B\right\rangle _{s}\ +\int_{0}^{t}\sigma \left( s,X_{s}\right) dB_{s},t\in 
\left[ 0,T\right]  \label{3.1}
\end{eqnarray}%
$\ \ \ \ \ \ \ \ \ \ \ \ \ \ \ \ \ \ \ \ \ \ \ \ \ \ $

with random initial data%
\begin{equation*}
\eta =\left\{ \eta \left( \theta \right) \right\} _{-\tau \leq \theta \leq
0}\in BC\left( \left[ -\tau ,0\right] ;\mathbb{R}\right),
\end{equation*}

with $BC\left( \left[ -\tau ,0\right] ;\mathbb{R}\right) $\ is a space of $%
\mathbb{R}$-valued functions defined on $\left[ -\tau ,0\right] $ and $\tau
>0,$ where $X_{t}=\left\{ X\left( t+\theta \right) :-\tau \leq \theta \leq
0\right\} ,$ and $u\left( t\right) $ $\mathbb{\in A}$ is called a strict
control variable for each $t\in \left[ 0,T\right] .$ Let the space%
\begin{equation*}
\widetilde{\mathcal{H}}_{T}:=\left\{ X=\left( X\left( t\right) \right)
_{t\in \left[ 0,T\right] },\text{ }\mathbb{F}^{\mathcal{P}}-\text{adapted
such that:}\int_{0}^{T}\widehat{E}\left[ \left\vert X\left( s\right)
\right\vert ^{2}\right] ds<\infty \right\} ,
\end{equation*}%
equipped with the norms $N_{C}\left( X\right) :=\left( \int_{0}^{T}\exp
\left( -2Cs\right) \widehat{E}\left( \left\vert X\left( s\right) \right\vert
^{2}\right) ds\right) ^{\frac{1}{2}},$ where $C\geq 0.$ Since 
\begin{equation*}
\exp \left( -2CT\right) N_{0}\left( X\right) \leq N_{C}\left( X\right) \leq
N_{0}\left( X\right) ,
\end{equation*}%
then these norms are equivalent. Moreover, the functions%
\begin{equation*}
Q,\sigma :\left[ 0,T\right] \times BC\left( \left[ -\tau ,0\right] ;\mathbb{R%
}\right) \times \Omega \rightarrow \mathbb{R},
\end{equation*}%
\begin{equation*}
b,\gamma :\left[ 0,T\right] \times BC\left( \left[ -\tau ,0\right] ;\mathbb{R%
}\right) \times \mathbb{A}\times \Omega \rightarrow \mathbb{R},
\end{equation*}%
are measurable, the random variable $Q\left( 0,0\right) \in L_{G}^{2}\left(
\Omega _{T}\right) $ as well as $Q\left( .,x\right) ,$ $\sigma \left(
.,x\right) ,$ $b\left( .,x,u\left( .\right) \right) ,$ $\gamma \left(
.,x,u\left( .\right) \right) \in $ $\widetilde{\mathcal{H}}_{T}$ for each $%
x\in BC\left( \left[ -\tau ,0\right] ;\mathbb{R}\right) $ and for each
strict control $u.$

\subsection{Problem of G-NSFDE relaxed control.}

In the absence of convexity assumptions, the strict control problem may not
have an optimal solution because $\mathbb{A}$ is too small to contain a
minimizer. Then the space of strict controls must be injected into a wider
space that has good properties of compactness and convexity. The set $%
\mathbb{A}$ is a compact Polish space, and $\mathcal{P}\left( \mathbb{A}%
\right) $ be the space of probability measures on $\mathbb{A}$, endowed with
its Borel $\sigma $-algebra $\mathcal{B}(\mathbb{A})$, ( \text{For
more details see} \cite{redjil2018}).

Next, we introduce the class of relaxed stochastic controls on $(\Omega , 
\mathcal{H},\widehat{E})$.

\begin{definition}
A relaxed stochastic control on $(\Omega ,\mathcal{H},\widehat{E})$ is an $%
\mathbb{F}^{\mathcal{P}}$-progressively measurable random measure of the
form $q(\omega ,dt,d\xi )=\mu _{t}(\omega ,d\xi )dt$ such that%
\begin{equation}
\begin{array}{l}
X\left( t\right) =\eta \left( 0\right) +Q\left( t,X_{t}\right) -Q\left(
0,\eta \right) +\int_{0}^{t}\int_{\mathbb{A}}b\left( s,X_{s},\xi \right) \mu
_{s}(d\xi )ds\ \ \  \\ 
\text{ \ \ \ \ \ \ \ }+\int_{0}^{t}\int_{\mathbb{A}}\gamma \left(
s,X_{s},\xi \right) \mu _{s}(d\xi )d\left\langle B\right\rangle _{s}\
+\int_{0}^{t}\sigma \left( s,X_{s}\right) dB_{s},\text{ }t\in \left[ 0,T%
\right].%
\end{array}%
\   \label{3.2}
\end{equation}
\end{definition}

Note that each strict control can be considered as a relaxed control via the
mapping%
\begin{equation}
\Phi (u)(dt,d\xi )=\delta _{u(t)}\left( d\xi \right).dt,  \label{3.3}
\end{equation}

where $\delta _{u\left( t\right) }$ is a Dirac measure charging $u\left(
t\right) $ for each $t.$

\begin{remark}
We mean by ``the process $q(\omega ,dt,d\xi )$ is $\mathbb{F}^{\mathcal{P}}$-
progressively measurable'' that for every $C\in \mathcal{B}(\mathbb{A})$\ and
for every $t\in \lbrack 0,T]$, the mapping $(s,\omega )\mapsto
\mu_{s}(\omega ,C)$ is $B([0,t])\otimes\widehat{\mathcal{F}}_{t}^{\mathcal{P}
} $-measurable. In particular, the process $(\mu _{t}(C))_{t\in \lbrack
0,T]} $ is adapted to $\mathbb{F}^{\mathcal{P}}$.
\end{remark}

We denote by $\mathcal{R}$ the class of relaxed stochastic controls.

\subsection{\protect\bigskip\ Existence and uniqueness of solution for
G-NSFDE}

In order to consider control problem $(\ref{3.1})$, we first study the
question of existence and uniqueness of solution to the following equation%
\begin{equation}
\ 
\begin{array}{l}
X\left( t\right) =\eta \left( 0\right) +Q\left( t,X_{t}\right) -Q\left(
0,\eta \right) +\int_{0}^{t}\int_{\mathbb{A}}b\left( s,X_{s},\xi \right) \mu
_{s}(d\xi )ds \\ 
\text{ \ \ \ \ \ \ \ }+\int_{0}^{t}\int_{\mathbb{A}}\gamma \left(
s,X_{s},\xi \right) \mu _{s}(d\xi )d\left\langle B\right\rangle _{s}\
+\int_{0}^{t}\sigma \left( s,X_{s}\right) dB_{s},\text{ }t\in \left[ 0,T%
\right].%
\end{array}
\label{3.4}
\end{equation}

where $\mu _{t}(d\xi )=\delta _{u\left( t\right) }(d\xi ).$

To guarantee existence and uniqueness of the solution of the equation $(\ref{3.1})$, we need the following assumptions:

\begin{description}
\item[$\left( A_{1}\right) $] There exists $K_{1}>0$ such that 
\begin{equation*}
\left\vert H\left( t,x,u\right) -H\left( t,y,u\right) \right\vert \leq
K_{1}\left\vert x\left( 0\right) -y\left( 0\right) \right\vert , \ \ \ \ \ \ \ 
\end{equation*}
\end{description}

uniformly with respect to $(t,\omega )$ for each $x,y\in BC\left( \left[ -\tau
,0\right] ;\mathbb{R}\right) $, where $H=b,\gamma ,\sigma .$

\begin{description}
\item[$\left( A_{2}\right) $] There exists $0<k_{0}<\frac{1}{4}$ such that 
\begin{equation}
\left\vert Q\left( t,x\right) -Q\left( t,y\right) \right\vert \leq
k_{0}\left\vert x\left( 0\right) -y\left( 0\right) \right\vert , \ \ \ \ \ \ \ 
\label{3.5}
\end{equation}
\end{description}

uniformly with respect to $(t,\omega )$ for each $x,y\in BC\left( \left[
-\tau ,0\right] ;\mathbb{R}\right) $.

Note that, since $\left\vert Q\left( 0,\eta \right) \right\vert \leq
k_{0}\left\vert \eta \left( 0\right) \right\vert +\left\vert Q\left(
0,0\right) \right\vert ,$ then $Q\left( 0,\eta \right) \in L_{G}^{2}\left(
\Omega _{T}\right) $ for all $\eta \in BC\left( \left[ -\tau ,0\right] ;%
\mathbb{R}\right) .$

\begin{remark}
Indeed, the functions $Q,$ $b,$ $\gamma,$ and $\sigma $ defined by%
\begin{eqnarray*}
b\left( t,x,u\right) &:&=c\left( t,x\left( 0\right) ,u\right) , \\
\gamma \left( t,x,u\right) &:&=\alpha \left( t,x\left( 0\right) ,u\right),
\\
\sigma \left( t,x\right) &:&=\beta \left( t,x\left( 0\right) \right) ,
\end{eqnarray*}
\end{remark}

\bigskip \textit{and}%
\begin{equation*}
Q\left( t,x\right) :=\lambda \left( t,x\left( 0\right) \right) ,
\end{equation*}%
\textit{such that, the functions }$c,\alpha $ and $\beta $ are $K_{1}$-
\textit{Lipschitz, and }$\lambda $\textit{\ is }$k_{0}$-\textit{Lipschitz }%
uniformly with respect to $(t,\omega )$ for each $x\in BC\left( \left[
-\tau ,0\right] ;\mathbb{R}\right) $\textit{, satisfies the assumptions }$%
\left( A_{1}\right) $\textit{\ and }$\left( A_{2}\right) .$

\begin{theorem}
Let the assumptions $\left( A_{1}\right) $ and $\left( A_{2}\right) $ are
satisfied. Then, for each $u\left( t\right) \in \mathbb{A},$ the integral
equation $(\ref{3.1})$ has an unique solution \ $X^{u}\in \widetilde{%
\mathcal{H}}_{T}$.
\end{theorem}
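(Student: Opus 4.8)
The plan is to recast $(\ref{3.1})$ as a fixed-point equation and apply the Banach contraction principle on $\widetilde{\mathcal{H}}_{T}$, using crucially that the weighted norms $N_{C}$ are all equivalent, so that completeness under any one of them follows from completeness under $N_{0}$. I would define the operator $\Lambda:\widetilde{\mathcal{H}}_{T}\to\widetilde{\mathcal{H}}_{T}$ by letting $\Lambda(X)(t)$ be the right-hand side of $(\ref{3.1})$; a process $X^{u}$ solves the equation if and only if it is a fixed point of $\Lambda$, so existence and uniqueness of the fixed point give existence and uniqueness of the solution.

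First I would verify that $\Lambda$ is well defined, i.e.\ that $\Lambda(X)\in\widetilde{\mathcal{H}}_{T}$ for every $X\in\widetilde{\mathcal{H}}_{T}$. Measurability and $\mathbb{F}^{\mathcal{P}}$-adaptedness are inherited from those of $X$ and of the coefficients. For the square-integrability requirement $\int_{0}^{T}\widehat{E}[|\Lambda(X)(s)|^{2}]\,ds<\infty$, I would first turn the Lipschitz hypotheses $(A_{1})$ and $(A_{2})$ into linear-growth bounds by comparing each coefficient with its value at a fixed argument, using that $Q(0,0)\in L_{G}^{2}(\Omega_{T})$ and that $Q(\cdot,x),\sigma(\cdot,x),b(\cdot,x,u(\cdot)),\gamma(\cdot,x,u(\cdot))\in\widetilde{\mathcal{H}}_{T}$. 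The $dB_{s}$ and $d\langle B\rangle_{s}$ integrals are then controlled by the isometry and the $d\langle B\rangle$-estimate of the Lemma, together with the Burkholder--Davis--Gundy-type estimates stated above.

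The core of the argument is the contraction estimate. For $X,Y\in\widetilde{\mathcal{H}}_{T}$ I would decompose $\Lambda(X)(t)-\Lambda(Y)(t)$ into the neutral difference $Q(t,X_{t})-Q(t,Y_{t})$ and the three integral differences, and apply $(a_{1}+a_{2}+a_{3}+a_{4})^{2}\leq 4\sum_{i}a_{i}^{2}$. By $(A_{2})$ and the identity $X_{t}(0)=X(t)$, the neutral term is bounded by $4k_{0}^{2}\,\widehat{E}[|X(t)-Y(t)|^{2}]$; by $(A_{1})$ together with the Lemma and the Burkholder--Davis--Gundy estimates, each integral difference is bounded by a constant times $\int_{0}^{t}\widehat{E}[|X(s)-Y(s)|^{2}]\,ds$. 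Multiplying by $e^{-2Ct}$ and integrating over $[0,T]$, the neutral contribution keeps the prefactor $4k_{0}^{2}$, which is strictly below $1$ precisely because $k_{0}<\tfrac14$ forces $4k_{0}^{2}<\tfrac14$, while a Fubini computation turns each integral contribution into a factor of order $1/C$. Choosing $C$ large enough that $4k_{0}^{2}+\mathrm{const}/C<1$ makes $\Lambda$ a strict $N_{C}$-contraction, and the Banach fixed-point theorem yields the unique $X^{u}\in\widetilde{\mathcal{H}}_{T}$.

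The main obstacle, and the reason a plain Lipschitz condition on $Q$ does not suffice, is that the neutral term $Q(t,X_{t})$ is evaluated pointwise in $t$ rather than integrated, so its contribution to the contraction constant cannot be diminished by enlarging $C$. The contraction must therefore be produced, for this term alone, by the size of the Lipschitz constant $k_{0}$; the hypothesis $k_{0}<\tfrac14$ (giving $4k_{0}^{2}<\tfrac14$) leaves ample room below $1$ into which the $C$-controllable integral terms can be absorbed.
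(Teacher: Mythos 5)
Your proposal is correct and follows essentially the same route as the paper: both recast $(\ref{3.1})$ as a fixed point of the integral operator on $\widetilde{\mathcal{H}}_{T}$, isolate the neutral term $Q(t,X_{t})-Q(t,\overline{X}_{t})$ whose pointwise contribution is controlled only by the smallness of $k_{0}$, and absorb the three integral differences via the equivalent weighted norms $N_{C}$ and a Fubini computation. The only (harmless) differences are cosmetic: you use the sharp constant $4$ in place of the paper's $2^{k-1}=8$ and let $C\to\infty$ to shrink the integral contribution, whereas the paper fixes $C=8K_{1}^{2}(T+T\bar{\sigma}^{2}+C_{2})$ so that this contribution is exactly bounded by $\tfrac12$ and then uses $8k_{0}^{2}+\tfrac12<1$.
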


\begin{proof}
Let the mapping $\Theta :\widetilde{\mathcal{H}}_{T}\rightarrow \widetilde{%
\mathcal{H}}_{T}$ defined by: for each $t\in \left[ 0,T\right] ,$ 
\begin{eqnarray}
\Theta \left( X\right) \left( t\right) &=&\eta \left( 0\right) +Q\left(
t,X_{t}\right) -Q\left( 0,\eta \right) +\int_{0}^{t}b\left( s,X_{s},u\left(
s\right) \right) ds\   \notag \\
&&+\int_{0}^{t}\gamma \left( s,X_{s},u\left( s\right) \right) d\left\langle
B\right\rangle _{s}\ +\int_{0}^{t}\sigma \left( s,X_{s}\right) dB_{s}.
\label{3.6}
\end{eqnarray}

We have for all $X,\overline{X}\in \widetilde{\mathcal{H}}_{T}$%
\begin{eqnarray}
&&\left\vert \Theta \left( X\right) \left( t\right) -\Theta \left( \overline{%
X}\right) \left( t\right) \right\vert  \notag \\
&\leq &\left\vert Q\left( t,X_{t}\right) -Q\left( t,\overline{X}_{t}\right)
\right\vert +\left\vert \int_{0}^{t}\left[ b\left( s,X_{s},u\left( s\right)
\right) -b\left( s,\overline{X}_{s},u\left( s\right) \right) \right]
ds\right\vert \   \notag \\
&&+\left\vert \int_{0}^{t}\left[ \gamma \left( s,X_{s},u\left( s\right)
\right) -\gamma \left( s,\overline{X}_{s},u\left( s\right) \right) \right]
d\left\langle B\right\rangle _{s}\right\vert \ \   \notag \\
&&+\left\vert \int_{0}^{t}\left[ \sigma \left( s,X_{s}\right) -\sigma \left(
s,\overline{X}_{s}\right) \right] dB_{s}\right\vert .  \label{3.7}
\end{eqnarray}

Taking $G$-expectation on both sides, and using the following inequality%
\begin{equation}
\left( \sum_{i=1}^{k}d_{i}\right) ^{2}\leq 2^{k-1}\sum_{i=1}^{k}d_{i}^{2},\ 
\text{for each }d_{1}...d_{k}\ >0  \label{3.8}
\end{equation}

we have$\ \ \ \ $%
\begin{eqnarray}
&&\widehat{E}\left[ \left\vert \Theta \left( X\right) \left( t\right)
-\Theta \left( \overline{X}\right) \left( t\right) \right\vert ^{2}\right] 
\notag \\
&\leq &8\widehat{E}\left[ \left\vert Q\left( t,X_{t}\right) -Q\left( t,%
\overline{X}_{t}\right) \right\vert ^{2}\right] +8\widehat{E}\left[
\left\vert \int_{0}^{t}\left[ b\left( s,X_{s},u\left( s\right) \right)
-b\left( s,\overline{X}_{s},u\left( s\right) \right) \right] ds\right\vert
^{2}\ \right]  \notag \\
&&+8\widehat{E}\left[ \left\vert \int_{0}^{t}\left[ \gamma \left(
s,X_{s},u\left( s\right) \right) -\gamma \left( s,\overline{X}_{s},u\left(
s\right) \right) \right] d\left\langle B\right\rangle _{s}\right\vert ^{2}%
\right] \   \notag \\
&&+8\widehat{E}\left[ \left\vert \int_{0}^{t}\left[ \sigma \left(
s,X_{s}\right) -\sigma \left( s,\overline{X}_{s}\right) \right]
dB_{s}\right\vert ^{2}\right]  \notag \\
&:&=8\sum_{i=1}^{4}U_{i}.  \label{3.9}
\end{eqnarray}

Now, we have by assumption $\left( A_{2}\right) $%
\begin{equation}
U_{1}\leq k_{0}^{2}\widehat{E}\left[ \left\vert X\left( t\right) -\overline{X%
}\left( t\right) \right\vert ^{2}\ \right]  \label{3.10}
\end{equation}

By applying H\"{o}lder inequality and $\left( A_{1}\right) $, we have%
\begin{eqnarray}
U_{2} &\leq &T\int_{0}^{T}\widehat{E}\left[ \left\vert \left[ b\left(
s,X_{s},u\left( s\right) \right) -b\left( s,\overline{X}_{s},u\left(
s\right) \right) \right] \right\vert ^{2}\ \ \right] ds\   \notag \\
&\leq &TK_{1}^{2}\int_{0}^{T}\widehat{E}\left[ \left\vert X\left( s\right) -%
\overline{X}\left( s\right) \right\vert ^{2}\ \right] ds.  \label{3.11}
\end{eqnarray}

Similarly, by using the $G$-BDG inequalities, we obtain%
\begin{eqnarray*}
U_{3}+U_{4} &\leq &T\bar{\sigma}^{2}\int_{0}^{T}\widehat{E}\left[ \left\vert
\gamma \left( s,X_{s},u\left( s\right) \right) -\gamma \left( s,\overline{X}%
_{s},u\left( s\right) \right) \right\vert ^{2}\ \right] ds \\
&&+C_{2}\int_{0}^{T}\widehat{E}\left[ \left\vert \sigma \left(
s,X_{s}\right) -\sigma \left( s,\overline{X}_{s}\right) \right\vert ^{2}\ %
\right] ds,
\end{eqnarray*}%
\begin{equation}
=K_{1}^{2}\left[ T\bar{\sigma}^{2}+C_{2}\right] \int_{0}^{T}\widehat{E}\left[
\left\vert X\left( s\right) -\overline{X}\left( s\right) \right\vert ^{2}\ %
\right] ds.  \label{3.12}
\end{equation}

Combining $(\ref{3.10})$,$(\ref{3.11})$, and $(\ref{3.12})$, we get
\begin{eqnarray}
\widehat{E}\left[ \left\vert \Theta \left( X\right) \left( t\right) -\Theta
\left( \overline{X}\right) \left( t\right) \right\vert ^{2}\right] &\leq
&8k_{0}^{2}\widehat{E}\left[ \left\vert X\left( t\right) -\overline{X}\left(
t\right) \right\vert ^{2}\ \right]  \notag \\
&&+C\int_{0}^{T}\widehat{E}\left[ \left\vert X\left( s\right) -\overline{X}%
\left( s\right) \right\vert ^{2}\ \right] ds.  \label{3.13}
\end{eqnarray}

where $C=8K_{1}^{2}\left( T+T\bar{\sigma}^{2}+C_{2}\right) $.

Multiplying by $\exp \left( -2Ct\right) $ both sides of inequality $(\ref{3.13})$ and integrating on $\left[ 0,T\right] ,$ we obtain\ \ \ \ \ \ \
\ \ \ \ \ \ \ \ \ 
\begin{eqnarray}
N_{C}^{2}\left[ \Theta \left( X\right) -\Theta \left( \overline{X}\right) %
\right] &\leq &8k_{0}^{2}N_{C}^{2}\left[ X-\overline{X}\right]  \notag \\
&&+C\int_{0}^{T}\exp \left( -2Ct\right) \left( \int_{0}^{t}\widehat{E}\left[
\left\vert X\left( s\right) -\overline{X}\left( s\right) \right\vert ^{2}\ %
\right] ds\right) dt  \notag \\
&\leq &8k_{0}^{2}N_{C}^{2}\left[ X-\overline{X}\right]  \notag \\
&&+C\int_{0}^{T}\left( \widehat{E}\left[ \left\vert X\left( s\right) -%
\overline{X}\left( s\right) \right\vert ^{2}\ \right] \int_{s}^{T}\exp
\left( -2Ct\right) dt\right) ds  \notag \\
&\leq &8k_{0}^{2}N_{C}^{2}\left[ X-\overline{X}\right]  \notag \\
&&+\int_{0}^{T}\widehat{E}\left[ \left\vert X\left( s\right) -\overline{X}%
\left( s\right) \right\vert ^{2}\ \right] \left( \frac{e^{-2Cs}-e^{-2CT}}{2}%
\right) ds  \notag \\
&\leq &8k_{0}^{2}N_{C}^{2}\left[ X-\overline{X}\right] +\frac{1}{2}N_{C}^{2}%
\left[ X-\overline{X}\right].  \label{3.14}
\end{eqnarray}

Thus, we obtain the following estimation%
\begin{equation*}
N_{C}\left[ \Theta \left( X\right) -\Theta \left( \overline{X}\right) \right]
\leq \sqrt{8k_{0}^{2}+\frac{1}{2}}N_{C}\left[ X-\overline{X}\right] .
\end{equation*}

We have, by using H\"{o}lder inequality, 
\begin{eqnarray*}
N_{0}^{2}\left( \overset{.}{\underset{0}{\int }}b\left( s,0,u\left( s\right)
\right) ds\right) &=&\overset{T}{\underset{0}{\int }}\widehat{E}\left[
\left\vert \overset{t}{\underset{0}{\int }}b\left( s,0,u\left( s\right)
\right) ds\right\vert ^{2}\right] dt \\
&\leq &T\overset{T}{\underset{0}{\int }}\overset{t}{\underset{0}{\int }%
\widehat{E}}\left[ \left\vert b\left( s,0,u\left( s\right) \right)
\right\vert ^{2}ds\right] dt \\
&\leq &T^{2}N_{0}^{2}\left( b\left( .,0,u\left( .\right) \right) \right).
\end{eqnarray*}

Similarly, it easy to check, by $G$-BDG inequalities, that%
\begin{equation*}
N_{0}^{2}\left( \overset{.}{\underset{0}{\int }}\gamma \left( s,0,u\left(
s\right) \right) d\left\langle B\right\rangle _{s}\right) \leq \overline{%
\sigma }^{2}T^{2}N_{0}^{2}\left( \gamma \left( .,0,u\left( .\right) \right)
\right)
\end{equation*}

and%
\begin{equation*}
N_{0}^{2}\left( \overset{.}{\underset{0}{\int }}\sigma \left( s,0\right)
dB_{s}\right) \leq C_{2}TN_{0}^{2}\left( \sigma \left( .,0\right) \right).
\end{equation*}

Now observe that,%
\begin{eqnarray*}
\Theta \left( 0\right) \left( t\right) &=&\eta \left( 0\right) +Q\left(
t,0\right) -Q\left( 0,\eta \right) +\overset{t}{\underset{0}{\int }}b\left(
s,0,u\left( s\right) \right) ds \\
&&+\overset{t}{\underset{0}{\int }}\gamma \left( s,0,u\left( s\right)
\right) d\left\langle B\right\rangle _{s}+\overset{t}{\underset{0}{\int }}%
\sigma \left( s,0\right) dB_{s}.
\end{eqnarray*}

It follows that%
\begin{eqnarray*}
N_{0}\left( \Theta \left( 0\right) \right) &\leq &\sqrt{T}\left( \left\Vert
Q\left( 0,\eta \right) \right\Vert _{L_{G}^{2}\left( \Omega _{T}\right)
}+\left\vert \eta \left( 0\right) \right\vert \right) +N_{0}\left( Q\left(
.,0\right) \right) +TN_{0}\left( b\left( .,0,u\left( .\right) \right) \right)
\\
&&+\overline{\sigma }TN_{0}\left( \gamma \left( .,0,u\left( .\right) \right)
\right) +\sqrt{C_{2}T}N_{0}\left( \sigma \left( .,0\right) \right) ,
\end{eqnarray*}

then the process $\Theta \left( 0\right) \in \widetilde{\mathcal{H}}_{T}$,
so that if $X\in \widetilde{\mathcal{H}}_{T}$ then 
\begin{equation*}
N_{C}\left( \Theta \left( X\right) \right) \leq N_{C}\left( \Theta \left(
X\right) -\Theta \left( 0\right) \right) +N_{C}\left( \Theta \left( 0\right)
\right) \leq N_{0}\left( X\right) +N_{0}\left( \Theta \left( 0\right)
\right) <\infty .
\end{equation*}

This means that $\Theta \left( X\right) \in $ $\widetilde{\mathcal{H}}_{T}$,
which implies that $\Theta $ is well defined.

Finally, taking into account the fact that $\sqrt{8k_{0}^{2}+\frac{1}{2}}<1$
and assumption $(A_{2})$, we deduce that $\ \Theta \left( X\right) $ is a
contraction on $\widetilde{\mathcal{H}}_{T}$, then the fixed point $X^{u}\in 
\widetilde{\mathcal{H}}_{T}$ is the unique solution of $(\ref{3.2})$.
The proof is completed.
\end{proof}

\subsection{Relaxed control problem}

In this section, we consider a relaxed control problem $(\ref{3.2})$.
Let $X^{\mu }$ denotes the solution of equation $(\ref{3.2})$
associated with the relaxed control. We establish the existence of a
minimizer of the cost corresponding to $\mu$.

\begin{center}
\begin{equation*}
J(\mu )=\widehat{E}\left[ \int_{0}^{T}\int_{A}\mathcal{L}(t,X_{t}^{\mu },\xi
)\mu _{t}(d\xi )\,ds+\Psi (X_{T}^{\mu })\right] ,
\end{equation*}
\end{center}

the functions, 
\begin{eqnarray*}
\mathcal{L} &:&\left[ 0,T\right] \times BC\left( \left[ -\tau ,0\right] ;%
\mathbb{R}\right) \times \mathbb{A}\rightarrow \mathbb{R}, \\
\Psi &:&BC\left( \left[ -\tau ,0\right] ;\mathbb{R}\right) \mathcal{%
\longrightarrow }\mathbb{R},
\end{eqnarray*}%
satisfy the following assumption:

\begin{description}
\item[$\left( A_{3}\right) $] $\mathcal{L}$, $\Psi $ are bounded and for
each $t\in[0,T]$ and $x\in BC\left( \left[ -\tau ,0\right] ;\mathbb{R}%
\right) $ the functions $\mathcal{L}(s,x,\cdot)$, $\Psi(s,x,\cdot) $ are
continuous. Additionally, we suppose that:
\end{description}

\begin{equation*}
\left\vert \mathcal{L}\left( t,x,u\right) -\mathcal{L}\left(
t,y,u\right)\right\vert +\left\vert \Psi \left( x\right) -\Psi \left(
y\right)\right\vert \leq K_{1}\left\vert x\left( 0\right) -y\left(
0\right)\right\vert .
\end{equation*}

We recall that in the strict control problem

\begin{equation}
J(u)=\widehat{E}\left[ \int_{0}^{T}\mathcal{L}(t,X_{t}^{u},u\left( t\right)
)\,dt+\Psi (X_{T}^{u})\right]  \label{3.15}
\end{equation}%
\ \ \ \ \ 

over the set $\mathcal{U}$,

\begin{center}
\begin{eqnarray}
X^{u}\left( t\right) &=&\eta \left( 0\right) +Q\left( t,X_{t}^{u}\right)
-Q\left( 0,\eta \right) +\int_{0}^{t}b(s,X_{s}^{u},u\left( s\right) )ds\ \  
\notag \\
&&+\int_{0}^{t}\gamma (s,X_{s}^{u},u\left( s\right) ))d\left\langle
B\right\rangle _{s}\ +\int_{0}^{t}\sigma \left( s,X_{s}^{u}\right) dB_{s}
\label{3.16}
\end{eqnarray}%
\ \ \ \ \ \ \ 
\end{center}

then, we have

\begin{center}
\begin{eqnarray}
X^{\mu }\left( t\right) &=&\eta \left( 0\right) +Q\left( t,X_{t}^{\mu
}\right) -Q\left( 0,\eta \right) +\int_{0}^{t}\int_{\mathbb{A}%
}b(s,X_{s}^{\mu },\xi )ds\ \   \notag \\
&&+\int_{0}^{t}\int_{\mathbb{A}}\gamma (s,X_{s}^{\mu },\xi ))\mu _{s}(d\xi
)d\left\langle B\right\rangle _{s}\ +\int_{0}^{t}\sigma \left( s,X_{s}^{\mu
}\right) dB_{s}.  \label{3.17}
\end{eqnarray}%
$\ $
\end{center}

We suppose as well that the coefficients of the $G$-NSFDE verify the
following condition
\begin{description}
\item[$\left( A_{4}\right) $] The coefficients $b,\gamma ,\sigma $ are bounded and for every fixed $t\in \lbrack 0,T]$ and $x\in BC\left( \left[ -\tau ,0\right] ;\mathbb{R}\right) $ the functions $b(t,x,\cdot ),\gamma (t,x,\cdot )$ are continuous $q.s$.
\end{description}
\subsection{Approximation and existence of relaxed optimal control}

By introducing the relaxed control problem, the next lemma, which extends
the celebrated Chattering Lemma, states that each relaxed control in $%
\mathcal{R}$ can be approximated by strict controls$.$

\begin{definition}
(\textbf{stable convergence}) Let $\mu ^{n},\mu \in \mathcal{R},n\in \mathbb{%
N}^{\ast }$. We say that, we have a stable convergence, if for any
continuous function $f:\left[ 0,T\right] \times \mathbb{A}\rightarrow 
\mathbb{R},$ we have%
\begin{equation}
\underset{n\rightarrow \infty }{\lim }\int_{\left[ 0,T\right] \times \mathbb{%
A}}f\left( t,\xi \right) \mu ^{n}\left( dt,d\xi \right) =\int_{\left[ 0,T%
\right] \times \mathbb{A}}f\left( t,\xi \right) \mu \left( dt,d\xi \right)
\label{3.18}
\end{equation}
\end{definition}

\begin{lemma}\label{gch36}
(\cite{redjil2018}) (\textbf{(G-Chattering Lemma)}) Let $(\mathbb{A},d)$ be
a separable compact metric space.\ Let $(\mu _{t})_{t\geq 0}$ be an $\mathbb{%
F}^{\mathcal{P}}$-progressively measurable process taking values in $%
\mathcal{P}(\mathbb{A})$. Then there exists a sequence $(u^{n}(t))_{n\geq 0}$
of $\mathbb{F}^{\mathcal{P}}$-progressively measurable processes taking
values in $\mathbb{A},$ such that the sequence of random measures $\delta {%
_{u^{n}{(t)}}}(d\xi )dt$ converges in the sense of stable convergence (thus
weakly) to $\mu _{t}(d\xi )dt\,\,$ $q.s.$
\end{lemma}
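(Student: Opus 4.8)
The plan is to transport the classical Chattering Lemma to the sublinear setting by carrying out the approximation pathwise, and then to observe that the two $G$-specific requirements, namely $\mathbb{F}^{\mathcal{P}}$-progressive measurability and quasi-sure (rather than merely $\mathbb{P}$-almost sure) convergence, come essentially for free because all the quantitative estimates are deterministic in the time variable and uniform in $\omega$. First I would fix, for each level $n$, a finite Borel partition $\{A_{1}^{n},\dots ,A_{m_{n}}^{n}\}$ of the compact metric space $\mathbb{A}$ with $\mathrm{diam}(A_{j}^{n})\le 1/n$ and representatives $a_{j}^{n}\in A_{j}^{n}$ (available by separability and compactness), together with a uniform subdivision $0=s_{0}<s_{1}<\dots <s_{N_{n}}=T$ of $[0,T]$ of deterministic mesh $h_{n}\to 0$. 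This reduces the statement to two successive approximations: a space discretization replacing $\mu _{t}$ by the atomic measure $\sum_{j}\mu _{t}(A_{j}^{n})\,\delta _{a_{j}^{n}}$, and a time chattering that converts the resulting atomic relaxed control into a strict one.

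For the chattering step I would, on each block $I_{k}=[s_{k},s_{k+1})$, split $I_{k}$ into consecutive subintervals of lengths proportional to weights $p_{k,j}^{n}$ and put $u^{n}\equiv a_{j}^{n}$ on the $j$-th subinterval. To keep the control non-anticipating I would read the weights off the past, taking $p_{k,j}^{n}:=\tfrac{1}{h_{n}}\int_{I_{k-1}}\mu _{t}(A_{j}^{n})\,dt$ (the average over the previous block, with an arbitrary choice on $I_{0}$). Since $\mu $ is $\mathbb{F}^{\mathcal{P}}$-progressively measurable, $p_{k,j}^{n}$ is $\widehat{\mathcal{F}}_{s_{k}}^{\mathcal{P}}$-measurable, so for $t\in I_{k}$ the value $u^{n}(t)$, being determined by $t$ and the partial sums of $(p_{k,j}^{n})_{j}$, is $\widehat{\mathcal{F}}_{t}^{\mathcal{P}}$-measurable; as every ingredient (evaluating $\mu _{t}$ on the Borel sets $A_{j}^{n}$, integrating in $t$, locating $t$ in the proportional subdivision, selecting $a_{j}^{n}$) is measurable, $u^{n}$ is jointly measurable in $(t,\omega )$ and $\mathbb{F}^{\mathcal{P}}$-progressively measurable.

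The convergence is then a pathwise computation. Fixing a continuous $f:[0,T]\times \mathbb{A}\to \mathbb{R}$ and writing $\Delta ^{n}(\omega ):=\int_{0}^{T}f(t,u^{n}(t))\,dt-\int_{0}^{T}\!\int_{\mathbb{A}}f(t,\xi )\,\mu _{t}(d\xi )\,dt$, I would split $\Delta ^{n}$ into three pieces: (i) a space-discretization error bounded by $T\,\rho _{f}^{\xi }(1/n)$, where $\rho _{f}^{\xi }$ is the modulus of continuity of $f$ in $\xi $, using $\mathrm{diam}(A_{j}^{n})\le 1/n$; (ii) a time-freezing error bounded by $T\,\rho _{f}^{t}(h_{n})$, from replacing $t$ by a node of $I_{k}$ inside each block; and (iii) the causality gap, which after the space and Riemann approximations equals, up to the errors in (i)--(ii), the time-shift term $\int_{0}^{T}\!\int_{\mathbb{A}}f(t,\xi )\big(\mu _{t-h_{n}}-\mu _{t}\big)(d\xi )\,dt$. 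The crucial observation is that this last term is controlled by the modulus of continuity of $f$ in $t$: the change of variable $r=t-h_{n}$ turns its first half into $\int \int f(r+h_{n},\xi )\mu _{r}(d\xi )\,dr$ over a shifted domain, which tends to the target as $h_{n}\to 0$ by uniform continuity of $f$ on the compact domain, with a rate depending only on $f$ and on boundary terms of order $h_{n}$, and not on $\omega $. Hence $\Delta ^{n}(\omega )\to 0$, so $\delta _{u^{n}(t)}(d\xi )\,dt\to \mu _{t}(d\xi )\,dt$ stably.

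The delicate point, and the step I would treat most carefully, is precisely balancing non-anticipativity against approximation quality: the previous-block averaging is what makes $u^{n}$ adapted, and its price is exactly the time-shift term (iii). The pleasant feature is that this price is paid by the uniform continuity of the \emph{deterministic} test function $f$ in time, so all three bounds are $\mathbb{P}$-free and uniform in $\omega $. Consequently the set of $\omega $ on which $\mu $ fails to be a genuine measurable $\mathcal{P}(\mathbb{A})$-valued path is the only exceptional set; by the $\mathbb{F}^{\mathcal{P}}$-progressive measurability of $\mu $ this set is polar, and on its complement the convergence holds for every continuous $f$ simultaneously. This upgrade from ``$\mathbb{P}$-a.s.\ for each $\mathbb{P}\in \mathcal{P}$'' to ``quasi-surely'' is where the argument departs from the classical lemma, and recognizing that the chattering error is intrinsically deterministic is the key that makes the $q.s.$ statement attainable.
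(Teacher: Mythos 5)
Your proposal is correct and follows essentially the approach behind the cited result: the paper itself gives no proof of this lemma (it is quoted from \cite{redjil2018}), and the argument there is exactly the classical chattering construction carried out pathwise --- space discretization of $\mathbb{A}$, proportional time-splitting with weights read off the previous block to preserve adaptedness, and error bounds expressed through the moduli of continuity of the deterministic test function $f$. Your key observation, that all three error terms are deterministic and uniform in $\omega$ so that the convergence upgrades from $\mathbb{P}$-a.s.\ to quasi-sure, is precisely the point that makes the lemma valid in the $G$-framework.
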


Taking use of the fact that under $\mathbb{P}\in $ $\mathcal{P}$, $B$ is a
continuous martingale with a quadratic variation process $\left\langle
B\right\rangle $ such that $c_{t}$ $:=\frac{d\left\langle B\right\rangle _{t}%
}{dt}$ is bounded. \ Let $X^{\mu }$ and $X^{n}$ the corresponding solutions
satisfy the following integral equations type of $G$-NSFDEs:%
\begin{eqnarray}
X^{\mu }(t) &=&\eta \left( 0\right) +Q\left( t,X_{t}^{\mu }\right) -Q\left(
0,\eta \right)  \label{3.19} \\
&&+\int_{0}^{t}\int_{\mathbb{A}}(b(s,X_{s}^{\mu },\xi )+c_{s}\gamma
(s,X_{s}^{\mu },\xi ))\mu _{s}(d\xi )ds+\int_{0}^{t}\sigma (s,X_{s}^{\mu
})dB_{s}  \notag
\end{eqnarray}

and%
\begin{eqnarray}
X^{n}(t) &=&\eta \left( 0\right) +Q\left( t,X_{t}^{n}\right) -Q\left( 0,\eta
\right)  \label{3.20} \\
&&+\int_{0}^{t}(b(s,X_{s}^{n},\xi )+c_{s}\gamma (s,X_{s}^{n},\xi ))\delta
_{u^{n}\left( s\right) }\left( d\xi \right) ds+\int_{0}^{t}\sigma
(s,X_{s}^{n})dB_{s}  \notag
\end{eqnarray}%
$\ \ \ \ \ \ \ \ \ \ \ \ \ \ \ $

with random initial data%
\begin{equation}
X_{0}^{\mu }=X_{0}^{n}=\eta \in BC\left( \left[ -\tau ,0\right] ;\mathbb{R}%
\right).  \label{3.21}
\end{equation}

\begin{lemma}\label{lsr37}
(\textbf{stability results})

Let $\mu $ be a relaxed control,\ and let $\left( u^{n}\right) $ be a
sequence defined as in \textbf{(G-Chattering Lemma)}. Then we have
\begin{description}
\item[$\left( i\right) $] For every$\ \mathbb{P}\in \mathcal{P},$ \textit{it holds that}
\begin{equation}
\underset{n\rightarrow \infty }{\lim }E^{\mathbb{P}}\left[ \underset{0\leq
t\leq T}{\sup }\left\vert X^{n}(t)-X^{\mu }(t)\right\vert ^{2}\right] =0
\label{3.22}
\end{equation}

and%
\begin{equation}
\underset{n\rightarrow \infty }{\lim }\widehat{E}\left[ \underset{0\leq
t\leq T}{\sup }\left\vert X^{n}(t)-X^{\mu }(t)\right\vert ^{2}\right] =0.
\label{3.23}
\end{equation}%
$\ \ \ $

\item[$\left( ii\right) $]  Let $J(u^{n})$ and $J(\mu )$ \ be the corresponding cost functionals to $u^{n}$ and $\mu $ respectively. Then, there exists a subsequence $\left( u^{n_{k}}\right) $ of $\left( u^{n}\right) $ such that for every$\ \mathbb{P}\in \mathcal{P}$%
\begin{equation}
\ \underset{k\rightarrow \infty }{\lim }J^{\mathbb{P}}(u^{n_{k}})=J^{\mathbb{%
P}}(\mu )  \label{3.24}
\end{equation}%
$\ \ \ \ \ \ \ \ \ \ \ $ $\ \ \ \ $ \ \ \ 

and%
\begin{equation}
\underset{k\rightarrow \infty }{\lim }J(u^{n_{k}})=J(\mu ).  \label{3.25}
\end{equation}

\textit{Moreover},%
\begin{equation}
\inf_{u\in \mathcal{U}}J^{\mathbb{P}}(u)=\inf_{\mu \in \mathcal{R}}J^{%
\mathbb{P}}(\mu )  \label{3.26}
\end{equation}%
$\ \ \ $

\textit{and there exists a relaxed control } $\widehat{\mu }_{\mathbb{P}}\in 
\mathcal{R}$ such that%
\begin{equation}
J^{\mathbb{P}}(\widehat{\mu }_{\mathbb{P}})=\inf_{\mu \in \mathcal{R}}J^{%
\mathbb{P}}(\mu ).  \label{3.27}
\end{equation}

\end{description}

\end{lemma}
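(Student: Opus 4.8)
The plan is to fix an arbitrary $\mathbb{P}\in\mathcal{P}$ and work entirely under the classical expectation $E^{\mathbb{P}}$, exploiting the fact (recalled just before the statement) that under $\mathbb{P}$ the canonical process $B$ is a continuous martingale whose quadratic variation has bounded density $c_t=\tfrac{d\langle B\rangle_t}{dt}$. This reduces both dynamics \eqref{3.19}--\eqref{3.20} to ordinary It\^{o}-type SDEs with drift $\int_{\mathbb{A}}(b+c_s\gamma)\,\mu_s(d\xi)$ (resp.\ evaluated at $u^n(s)$) and uncontrolled diffusion $\sigma(s,X_s)$. To prove part $(i)$, I would set $\Delta^n(t):=X^n(t)-X^{\mu}(t)$ and write its integral equation by subtracting \eqref{3.19} from \eqref{3.20}. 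The neutral term contributes $Q(t,X^n_t)-Q(t,X^{\mu}_t)$, controlled by $k_0|\Delta^n(t)|$ via $(A_2)$; the diffusion difference is handled by the classical Burkholder--Davis--Gundy inequality under $\mathbb{P}$, and using $(A_1)$ its $\sup$-norm is bounded by $\int_0^t E^{\mathbb{P}}[|\Delta^n(s)|^2]\,ds$. The crucial drift term splits as
\begin{equation*}
\int_0^t\!\!\int_{\mathbb{A}}\!\big(b+c_s\gamma\big)(s,X^n_s,\xi)\,\delta_{u^n(s)}(d\xi)ds
-\int_0^t\!\!\int_{\mathbb{A}}\!\big(b+c_s\gamma\big)(s,X^{\mu}_s,\xi)\,\mu_s(d\xi)ds,
\end{equation*}
which I would decompose into a \emph{Lipschitz part}, obtained by freezing the measure at $\delta_{u^n(s)}$ and replacing $X^n$ by $X^{\mu}$ (bounded by $\int_0^tE^{\mathbb{P}}[|\Delta^n(s)|^2]ds$ through $(A_1)$ and boundedness of $c_s$), and a \emph{measure part}
\begin{equation*}
R^n(t):=\int_0^t\!\!\int_{\mathbb{A}}\!\big(b+c_s\gamma\big)(s,X^{\mu}_s,\xi)\,\big(\delta_{u^n(s)}(d\xi)-\mu_s(d\xi)\big)ds,
\end{equation*}
which captures the convergence $\delta_{u^n}\to\mu$ supplied by the G-Chattering Lemma \ref{gch36}.

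After collecting terms and using $8k_0^2<\tfrac12$ to absorb the $k_0|\Delta^n(t)|$ contribution on the left (exactly as in the contraction estimate of the preceding theorem), I would arrive at
\begin{equation*}
E^{\mathbb{P}}\Big[\sup_{0\le r\le t}|\Delta^n(r)|^2\Big]\le C_1\,E^{\mathbb{P}}\big[\sup_{0\le r\le t}|R^n(r)|^2\big]+C_2\int_0^t E^{\mathbb{P}}\Big[\sup_{0\le r\le s}|\Delta^n(r)|^2\Big]ds.
\end{equation*}
Gr\"{o}nwall's inequality then gives $E^{\mathbb{P}}[\sup_t|\Delta^n(t)|^2]\le C\,E^{\mathbb{P}}[\sup_t|R^n(t)|^2]$, so everything hinges on showing $R^n\to 0$. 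Since $b,\gamma$ are bounded and continuous in $\xi$ $q.s.$ by $(A_4)$, and $c_s$ is bounded, stable convergence of $\delta_{u^n(s)}(d\xi)ds$ to $\mu_s(d\xi)ds$ (the $q.s.$ conclusion of Lemma \ref{gch36}) forces $R^n(t)\to0$ $q.s.$, uniformly in $t$ by equicontinuity; boundedness of the integrand then yields $E^{\mathbb{P}}[\sup_t|R^n(t)|^2]\to0$ by dominated convergence. This proves \eqref{3.22}. For the $G$-expectation statement \eqref{3.23}, I would pass to the supremum over $\mathbb{P}\in\mathcal{P}$; the delicate point is that the constant $C$ above depends only on $K_1,k_0,T$ and the uniform bound $\bar{\sigma}$ on $c_s$, hence is independent of $\mathbb{P}$, and the $R^n$ estimate can be made uniform because $b,\gamma$ are bounded by a single constant. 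This uniformity is what lets $\sup_{\mathbb{P}}$ commute with the limit.

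For part $(ii)$, I would first prove $(\ref{3.24})$. Writing $J^{\mathbb{P}}(u^n)-J^{\mathbb{P}}(\mu)$ and using $(A_3)$: the terminal cost difference $|\Psi(X^n_T)-\Psi(X^{\mu}_T)|\le K_1|\Delta^n(T)|\to0$ in $L^2(\mathbb{P})$ by part $(i)$; the running cost splits again into a state-difference part (Lipschitz in $x$, vanishing by $(i)$) and a measure part $\int_0^T\int_{\mathbb{A}}\mathcal{L}(t,X^{\mu}_t,\xi)(\delta_{u^n}-\mu_s)(d\xi)dt$ that vanishes by stable convergence since $\mathcal{L}$ is bounded and continuous in $\xi$. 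Boundedness of $\mathcal{L},\Psi$ gives uniform integrability, so $E^{\mathbb{P}}$ passes to the limit along a subsequence $(u^{n_k})$, yielding \eqref{3.24}; taking $\sup_{\mathbb{P}}$ gives \eqref{3.25}. The identity \eqref{3.26} follows because $\Phi(\mathcal{U})\subset\mathcal{R}$ (every strict control embeds as a relaxed one via \eqref{3.3}), so $\inf_{\mathcal{R}}\le\inf_{\mathcal{U}}$, while \eqref{3.24} shows any relaxed cost is a limit of strict costs, giving the reverse inequality. Finally \eqref{3.27} is an existence-of-minimizer statement: I expect it to follow from compactness of $\mathcal{R}$ in the stable topology together with lower semicontinuity of $\mu\mapsto J^{\mathbb{P}}(\mu)$, which itself is a consequence of the continuity established in \eqref{3.24}. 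The main obstacle throughout is controlling the measure-difference terms $R^n$ and its cost analogue uniformly in $\mathbb{P}\in\mathcal{P}$ in a way compatible with passing $\sup_{\mathbb{P}}$ through the limit; the boundedness hypotheses in $(A_3),(A_4)$ are precisely what make this dominated-convergence argument uniform.
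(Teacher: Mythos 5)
Your proposal follows essentially the same route as the paper's proof: subtract the two dynamics, absorb the neutral term via $(A_2)$ and $k_0<\frac14$, split the drift difference into a Lipschitz part (handled by $(A_1)$, the BDG estimates and Gronwall) and a measure part that vanishes by the stable convergence of $\delta_{u^n(t)}(d\xi)dt$ from the G-Chattering Lemma combined with dominated convergence, and then deduce part $(ii)$ from part $(i)$ together with the embedding of strict controls into $\mathcal{R}$. The only cosmetic differences are that you apply Gronwall before passing to the limit and establish the per-$\mathbb{P}$ statement first, whereas the paper works with $\widehat{E}$ directly through an $\varepsilon$-optimal measure $\mathbb{P}^{\varepsilon}$ and applies Gronwall to the limit function $Z$; both arguments carry the same (acknowledged) delicacy in making the convergence uniform over $\mathcal{P}$.
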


\begin{proof}

\begin{description}
\item[$\left( i\right) $] The proof of this result is inspired by \cite
{redjil2018}. Subtracting $(\ref{3.19})$ from $(\ref{3.20})$ term by term, we have%
\begin{eqnarray}
X^{n}(t)-X^{\mu }(t) &=&\left[ Q\left( t,X_{t}^{n}\right) -Q\left(
t,X_{t}^{\mu }\right) \right]  \notag \\
&&+\int_{0}^{t}\int_{\mathbb{A}}(b(s,X_{s}^{n},\xi )+c_{s}\gamma
(s,X_{s}^{n},\xi ))\delta _{u^{n}\left( s\right) }\left( d\xi \right) ds 
\notag \\
&&-\int_{0}^{t}\int_{\mathbb{A}}(b(s,X_{s}^{\mu },\xi )+c_{s}\gamma
(s,X_{s}^{\mu },\xi ))\mu _{s}(d\xi )ds  \notag \\
&&+\int_{0}^{t}\left[ \sigma (s,X_{s}^{n})-\sigma (s,X_{s}^{\mu })\right]
dB_{s}  \notag \\
&=&\left[ Q\left( t,X_{t}^{n}\right) -Q\left( t,X_{t}^{\mu }\right) \right] +%
\mathcal{I}_{n}\left( s\right).  \label{3.28}
\end{eqnarray}

Taking $G$-expectation on both sides and using the assumptions $\left(
A_{1}\right) $ and $\left( A_{2}\right) ,$ it follows that%
\begin{center}
\begin{eqnarray}
\widehat{E}\left[ \underset{0\leq t\leq T}{\sup }\left\vert X^{n}\left(
t\right) -X^{\mu }\left( t\right) \right\vert ^{2}\right] &\leq & 2k_{0}^{2}%
\widehat{E}\left[ \underset{0\leq t\leq T}{\sup }\left\vert X^{n}\left(
t\right) -X^{\mu }\left( t\right) \right\vert ^{2}\right] \notag \\
&+& 2\widehat{E}\left[\underset{0\leq t\leq T}{\sup }\left\vert \mathcal{I}_{n}\left( s\right)\right\vert ^{2}\right]  \notag
\end{eqnarray}
\end{center}

then%
\begin{equation}
\ \widehat{E}\left[ \underset{0\leq t\leq T}{\sup }\left\vert X^{n}\left(
t\right) -X^{\mu }\left( t\right) \right\vert ^{2}\right] \ \ \leq \frac{2}{%
\left( 1-2k_{0}^{2}\right) }\widehat{E}\left[ \underset{0\leq t\leq T}{\sup }%
\left\vert \mathcal{I}_{n}\left( s\right) \right\vert ^{2}\right].  \label{3.29}
\end{equation}

We have%
\begin{eqnarray}
&&\widehat{E}\left[ \underset{0\leq t\leq T}{\sup }\left\vert \mathcal{I}%
_{n}\left( s\right) \right\vert ^{2}\right]  \notag \\
&\leq &\widehat{E}\left( \underset{0\leq t\leq T}{\sup }\left\vert
\int_{0}^{t}\int_{\mathbb{A}}(b(s,X_{s}^{n},\xi )+c_{s}\gamma
(s,X_{s}^{n},\xi ))\delta _{u^{n}\left( s\right) }\left( d\xi \right)
ds\right. \right.  \notag \\
&&\left. \left. -\int_{0}^{t}\int_{\mathbb{A}}(b(s,X_{s}^{\mu },\xi
)+c_{s}\gamma (s,X_{s}^{\mu },\xi ))\mu _{s}(d\xi )ds\right\vert ^{2}\right)
\notag \\
&&+\widehat{E}\left( \underset{0\leq t\leq T}{\sup }\left\vert \int_{0}^{t}%
\left[ \sigma (s,X_{s}^{n})-\sigma (s,X_{s}^{\mu })\right] dB_{s}\
\right\vert ^{2}\right) \ \ \   \notag \\
&\leq &\widehat{E}\left( \underset{0\leq t\leq T}{\sup }\left\vert
\int_{0}^{t}\int_{\mathbb{A}}(b(s,X_{s}^{n},\xi )+c_{s}\gamma
(s,X_{s}^{n},\xi ))\delta _{u^{n}\left( s\right) }\left( d\xi \right)
ds\right. \right.  \notag \\
&&\left. \left. -\int_{0}^{t}\int_{\mathbb{A}}(b(s,X_{s},\xi )+c_{s}\gamma
(s,X_{s},\xi ))\delta _{u^{n}\left( s\right) }(d\xi )ds\right\vert
^{2}\right)  \notag \\
&&+\widehat{E}\left( \underset{0\leq t\leq T}{\sup }\left\vert
\int_{0}^{t}\int_{\mathbb{A}}(b(s,X_{s},\xi )+c_{s}\gamma (s,X_{s},\xi
))\delta _{u^{n}\left( s\right) }\left( d\xi \right) ds\right. \right. 
\notag \\
&&\left. \left. -\int_{0}^{t}\int_{\mathbb{A}}(b(s,X_{s}^{\mu },\xi
)+c_{s}\gamma (s,X_{s}^{\mu },\xi ))\mu _{s}(d\xi )ds\right\vert ^{2}\right)
\notag \\
&&+\widehat{E}\left( \underset{0\leq t\leq T}{\sup }\left\vert \int_{0}^{t}%
\left[ \sigma (s,X_{s}^{n})-\sigma (s,X_{s}^{\mu })\right] dB_{s}\
\right\vert ^{2}\right).   \label{3.30}
\end{eqnarray}

\ \ \ \ Let $\varepsilon >0.$ Then, there exists $\mathbb{P}^{\varepsilon
}\in \mathcal{P}$ such that%
\begin{eqnarray}
&&\widehat{E}\left[ \underset{0\leq t\leq T}{\sup }\left\vert \mathcal{I}%
_{n}\left( s\right) \right\vert ^{2}\right] \notag \\
&&\leq E^{\mathbb{P}^{\varepsilon }}\left( \underset{0\leq t\leq T}{\sup }%
\left\vert \int_{0}^{t}\int_{\mathbb{A}}(b(s,X_{s}^{n},\xi )+c_{s}\gamma
(s,X_{s}^{n},\xi ))\delta _{u^{n}\left( s\right) }\left( d\xi \right)
ds\right. \right.  \notag \\
&&\left. \left. -\int_{0}^{t}\int_{\mathbb{A}}(b(s,X_{s},\xi )+c_{s}\gamma
(s,X_{s},\xi ))\delta _{u^{n}\left( s\right) }(d\xi )ds\right\vert
^{2}\right)  \notag \\
&&+E^{\mathbb{P}^{\varepsilon }}\left( \underset{0\leq t\leq T}{\sup }%
\left\vert \int_{0}^{t}\int_{\mathbb{A}}b(s,X_{s},\xi )\delta _{u^{n}\left(
s\right) }\left( d\xi \right) ds-\int_{0}^{t}\int_{\mathbb{A}}b(s,X_{s}^{\mu
},\xi )\mu _{s}(d\xi )ds\right\vert ^{2}\right)  \notag \\
&&+E^{\mathbb{P}^{\varepsilon }}\left( \underset{0\leq t\leq T}{\sup }%
\left\vert \int_{0}^{t}\int_{\mathbb{A}}c_{s}\gamma (s,X_{s},\xi )\delta
_{u^{n}\left( s\right) }\left( d\xi \right) ds \right. \right.   \notag \\
&&\left. \left.-\int_{0}^{t}\int_{\mathbb{A}%
}c_{s}\gamma (s,X_{s}^{\mu },\xi ))\mu _{s}(d\xi )ds\right\vert ^{2}\right)  \notag \\
&&+E^{\mathbb{P}^{\varepsilon }}\left( \underset{0\leq t\leq T}{\sup }%
\left\vert \int_{0}^{t}\left[ \sigma (s,X_{s}^{n})-\sigma (s,X_{s}^{\mu })%
\right] dB_{s}\ \right\vert ^{2}\right) +\varepsilon .  \label{3.31}
\end{eqnarray}

Then, we have%
\begin{eqnarray}
&&\widehat{E}\left[ \underset{0\leq t\leq T}{\sup }\left\vert \mathcal{I}%
_{n}\left( s\right) \right\vert ^{2}\right]  \notag \\
&\leq &16E^{\mathbb{P}^{\varepsilon }}\left( \underset{0\leq t\leq T}{\sup }%
\left\vert \int_{0}^{t}\int_{\mathbb{A}}(b(s,X_{s}^{n},\xi )+c_{s}\gamma
(s,X_{s}^{n},\xi ))\delta _{u^{n}\left( s\right) }\left( d\xi \right)
ds\right. \right.  \notag \\
&&\left. \left. -\int_{0}^{t}\int_{\mathbb{A}}(b(s,X_{s},\xi )+c_{s}\gamma
(s,X_{s},\xi ))\delta _{u^{n}\left( s\right) }(d\xi )ds\right\vert
^{2}\right)  \notag \\
&&+16E^{\mathbb{P}^{\varepsilon }}\left( \underset{0\leq t\leq T}{\sup }%
\left\vert \left[ \int_{0}^{t}\int_{\mathbb{A}}(b(s,X_{s},\xi )\delta
_{u^{n}\left( s\right) }\left( d\xi \right) ds\right. \right. \right.  \notag \\
&&\left. \left. \left. -\int_{0}^{t}\int_{\mathbb{A}%
}b(s,X_{s}^{\mu },\xi )\mu _{s}(d\xi )ds\right] \ \right\vert ^{2}\right) 
\notag \\
&&+16E^{\mathbb{P}^{\varepsilon }}\left( \underset{0\leq t\leq T}{\sup }%
\left\vert \left[ \int_{0}^{t}\int_{\mathbb{A}}c_{s}\gamma (s,X_{s},\xi
)\delta _{u^{n}\left( s\right) }\left( d\xi \right) ds\right. \right. \right.  \notag \\
&&\left. \left. \left.-\int_{0}^{t}\int_{\mathbb{A}}c_{s}\gamma (s,X_{s}^{\mu },\xi ))\mu _{s}(d\xi )ds\right] \right\vert ^{2}\right)  \notag \\
&&+16E^{\mathbb{P}^{\varepsilon }}\left( \underset{0\leq t\leq T}{\sup }%
\left\vert \int_{0}^{t}\left[ \sigma (s,X_{s}^{n})-\sigma (s,X_{s}^{\mu })%
\right] dB_{s}\ \right\vert ^{2}\right) +16\varepsilon ^{2}  \notag \\
&=&16\left\{ \left( \mathcal{I}_{\left( n,1\right) }+\mathcal{I}_{\left(
n,2\right) }+\mathcal{I}_{\left( n,3\right) }+\mathcal{I}_{\left( n,4\right)
}\right) +\varepsilon ^{2}\ \right\}. \label{3.32}
\end{eqnarray}

Since \ $b$, $\gamma $ are bounded and continuous in the control variable $%
\xi $, then, by using the dominated convergence theorem, and the stable
convergence of $\delta _{u^{n}\left( t\right) }\left( d\xi \right) dt$ to $%
\mu _{t}(d\xi )dt$, we have%
\begin{equation}
\underset{n\rightarrow \infty }{\lim }\mathcal{I}_{\left( n,2\right) }=%
\underset{n\rightarrow \infty }{\lim }\mathcal{I}_{\left( n,3\right) }=0.
\label{3.33}
\end{equation}
Similarly, we use the assumption $\left( A_{1}\right) ,$ then%

\begin{eqnarray}
\underset{n\rightarrow \infty }{\lim }\left( \mathcal{I}_{\left( n,1\right)
}+\mathcal{I}_{\left( n,4\right) }\right) \leq K_{1}^{2}\underset{n\rightarrow \infty }{\lim }\left[ E^{\mathbb{P}^{\varepsilon }}\left(\int_{0}^{T}\left\vert X^{n}\left( s\right) -X^{\mu }\left( s\right)\right\vert ^{2}\right) dt+\varepsilon ^{2}\right].\label{3.34}
\end{eqnarray}

It follows, by using dominated convergence theorem, that%
\begin{eqnarray}
\underset{n\rightarrow \infty }{\lim }E^{\mathbb{P}^{\varepsilon }}\left[ 
\underset{0}{\overset{T}{\int }}\left\vert X^{n}\left( s\right) -X^{\mu
}\left( s\right) \right\vert ^{2}ds\right] \leq \underset{0}{\overset{T}{\int }}\underset{n\rightarrow \infty }{\lim }E^{\mathbb{P}^{\varepsilon }}\left[ \left\vert X^{n}\left( s\right) -X^{\mu}\left( s\right) \right\vert ^{2}\right] ds  \notag \\
\leq\underset{0}{\overset{T}{\int }}\underset{n\rightarrow \infty }{\lim }
\widehat{E}\left[ \underset{0\leq \upsilon \leq s}{\sup }\left\vert
X^{n}\left( \nu \right) -X^{\mu }\left( \nu \right) \right\vert ^{2}\right]
ds. \label{3.35}
\end{eqnarray}

Taking $Z\left( \delta \right) =\underset{n\rightarrow \infty }{\lim }%
\widehat{E}\left[ \underset{0\leq t\leq \delta }{\sup }\left\vert
X^{n}\left( t\right) -X^{\mu }\left( t\right) \right\vert ^{2}\right] $, for
each $\delta >0$, then we deduce from the formulas $(\ref{3.34})$ and $(\ref{3.35})$, that%
\begin{equation}
Z\left( T\right) \leq \frac{32K_{1}^{2}}{1-2k_{0}^{2}}\left( \underset{0}{%
\overset{T}{\int }}Z\left( s\right) ds+\varepsilon ^{2}\right)  \label{3.36}
\end{equation}

using Gronwall's lemma, we conclude that%
\begin{equation}
\underset{n\rightarrow \infty }{\lim }\widehat{E}\left[ \underset{0\leq
t\leq T}{\sup }\left\vert X^{n}(t)-X^{\mu }(t)\right\vert ^{2}\right] =0.
\label{3.37}
\end{equation}
\item[$\left( ii\right) $] Property $\left( i\right) $ implies that there exists a subsequence $\left( X^{n_{k}}(t)\right) _{^{n_{k}}}$ that converges to $X^{\mu }(t)$ $q.s.$, and uniformly in $t$. We have, for all $\ \mathbb{P}\in \mathcal{P}$%
\begin{eqnarray}
\left\vert J^{\mathbb{P}}(u^{n_{k}})-J^{\mathbb{P}}(\mu )\right\vert &\leq
&E^{\mathbb{P}}\left[ \int_{0}^{T}\int_{\mathbb{A}}\left\vert \mathcal{L}%
(t,X_{t}^{n_{k}},\xi )-\mathcal{L}(t,X_{t}^{\mu },\xi )\right\vert \delta
_{u^{n_{k}}(t)}(d\xi )dt\right]  \notag \\
&&+E^{\mathbb{P}}\left[ \left\vert \int_{0}^{T}\int_{\mathbb{A}}\mathcal{L}%
(t,X_{t}^{\mu },\xi )\delta _{u^{n_{k}}(t)}(d\xi )dt\right. \right. \notag \\
&&\left. \left. -\int_{0}^{T}\int_{\mathbb{A}}\mathcal{L}(t,X_{t}^{\mu },\xi )\mu _{t}(d\xi )dt\right\vert \right]  \notag \\
&&+E^{\mathbb{P}}\left[ \left\vert \Psi (X_{T}^{n_{k}})-\Psi (X_{T}^{\mu
})\right\vert \right].  \label{3.38}
\end{eqnarray}

The first and third terms in the right-hand side converge to $0$ as a result
of the continuity and boundness assumptions on $\mathcal{L}$\ and $\Psi $
with respect to $X$\textit{\ }that. And, the second term on the right-hand
side tends to $0$, due to the continuity and the boundness of $\mathcal{L}$
in the variable $\xi $, and by the weak convergence of $\delta
_{u^{n_{k}}(t)}(d\xi )dt$ to $\mu _{t}(d\xi )dt,$ we use the dominated
convergence theorem to conclude.\newline
Using \textbf{Lemma} $\ref{lsr37}$ \textbf{(stability results)}, we obtain for all $\ \mathbb{P}\in \mathcal{P}$, 

\begin{equation}
\underset{k\rightarrow \infty }{\lim }J^{\mathbb{P}}(u^{n_{k}})=J^{\mathbb{P}%
}(\mu )  \label{3.39}
\end{equation}

then,%
\begin{equation}
\underset{k\rightarrow \infty }{\lim }J(u^{n_{k}})=J(\mu ),  \label{3.40}
\end{equation}

we have $J^{\mathbb{P}}(u)=J^{\mathbb{P}}(\delta _{u})$, This yields $%
\underset{u\in \mathcal{U}}{\inf }J^{\mathbb{P}}(u)\geq \underset{\mu \in R}{%
\inf }J^{\mathbb{P}}(\mu )$. Given an arbitrary $\mu \in \mathcal{R}$. From 
\textbf{Lemma} $\ref{gch36}$ \textbf{(G-Chattering Lemma)}, to obtain a sequence of strict controls $\left( u^{n_{k}}\right) \subset \mathcal{U}$ such that $\delta_{u^{n_{k}}(t)}(d\xi )dt$ converges weakly to $\mu _{t}(d\xi )dt$, we obtain 
\begin{equation}
J^{\mathbb{P}}(\mu )=\lim_{n\rightarrow \infty }J^{\mathbb{P}}(u^{n})\geq
\inf_{u\in \mathcal{U}}J^{\mathbb{P}}(u)  \label{3.41}
\end{equation}

since $\mu $ is arbitrary, we have: 
\begin{equation}
\inf_{\mu \in R}J^{\mathbb{P}}(\mu )\geq \inf_{u\in \mathcal{U}}J^{\mathbb{P}
}(u).  \label{3.42}
\end{equation}
\end{description}
\end{proof}

The main result is to give the following theorem. Note that this result
extends to $G$-NSFDEs with an uncontrolled diffusion coefficient. We show
that an optimal solution for the relaxed control problem exists, the proof
is based of the existence of optimal relaxed control for each $\mathbb{P\in }\mathcal{P}$ and a tightness argument.

\begin{theorem}
\label{relaxed} For every $u\in \mathcal{U}$ and $\mu \in \mathcal{R}$, we
have 
\begin{equation}
\inf_{u\in \mathcal{U}}J(u)=\inf_{\mu \in \mathcal{R}}J(\mu ).  \label{3.43}
\end{equation}

Moreover, there exists a relaxed control \textit{\ }$\widehat{\mu }\in 
\mathcal{R}$ such that%
\begin{equation}
J(\widehat{\mu })=\min_{\mu \in \mathcal{R}}J(\mu )  \label{3.44}
\end{equation}

recall that%
\begin{equation}
J(\mu )=\sup_{\mathbb{P}\in \mathcal{P}}J^{\mathbb{P}}(\mu )  \label{3.45}
\end{equation}

where for each $\mathbb{P}\in \mathcal{P}$, the relaxed cost functional is
given as follow%
\begin{equation}
J^{\mathbb{P}}(\mu )=E^{\mathbb{P}}\left[ \int_{0}^{T}\int_{\mathbb{A}}%
\mathcal{L}(t,X_{t}^{\mu },\xi )\mu _{t}(d\xi )\,dt+\Psi (X_{T}^{\mu })%
\right].  \label{3.46}
\end{equation}

Let $(\mu ^{n},$ $X^{\mu ^{n}})_{n\geq 0}$ be a minimizing sequence of $%
\underset{\mu \in \mathcal{R}}{\inf }J^{\mathbb{P}}(\mu )$ such that%
\begin{equation}
\underset{n\rightarrow \infty }{\lim }J^{\mathbb{P}}(\mu ^{n})=\inf_{\mu \in 
\mathcal{R}}J^{\mathbb{P}}(\mu )  \label{3.47}
\end{equation}

where $X^{\mu ^{n}}$ is the unique solution of $(\ref{3.2})$,
corresponding to the random variables $\mu ^{n}$ which belongs to the
compact set $M$. $\ \ \ \ \ \ \ \ \ $
\end{theorem}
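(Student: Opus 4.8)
The plan is to establish the two assertions in turn: first the identity $\inf_{u\in\mathcal{U}}J(u)=\inf_{\mu\in\mathcal{R}}J(\mu)$, and then the attainment of the relaxed infimum by the direct method. For the identity I would argue by two inequalities. Since every strict control embeds into a relaxed one through the map $\Phi$ of $(\ref{3.3})$ with $J(u)=J(\delta_u)$, the inclusion $\mathcal{U}\subset\mathcal{R}$ gives at once $\inf_{\mu\in\mathcal{R}}J(\mu)\le\inf_{u\in\mathcal{U}}J(u)$. For the reverse inequality I would fix an arbitrary $\mu\in\mathcal{R}$, invoke the G-Chattering Lemma (Lemma $\ref{gch36}$) to obtain strict controls $u^{n}$ with $\delta_{u^{n}(t)}(d\xi)dt$ converging stably, hence weakly, to $\mu_t(d\xi)dt$ $q.s.$, and then apply $(\ref{3.25})$ of Lemma $\ref{lsr37}$ to get $J(\mu)=\lim_{k}J(u^{n_k})\ge\inf_{u\in\mathcal{U}}J(u)$. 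Taking the infimum over $\mu\in\mathcal{R}$ then yields $(\ref{3.43})$.

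For the existence of a minimizer I would run the direct method on the sublinear functional $J=\sup_{\mathbb{P}\in\mathcal{P}}J^{\mathbb{P}}$. Starting from a minimizing sequence $(\mu^{n})$ for $\inf_{\mu\in\mathcal{R}}J(\mu)$, I would use that the relaxed controls take values in $\mathcal{P}(\mathbb{A})$ with $\mathbb{A}$ compact Polish, so that with the fixed $dt$-marginal on $[0,T]$ they lie in the compact set $M$; a tightness and Prokhorov argument then extracts a subsequence $(\mu^{n_k})$ converging stably, $q.s.$, to some $\widehat{\mu}\in\mathcal{R}$. The key point is that this single subsequence works for all $\mathbb{P}\in\mathcal{P}$ simultaneously, because the stable convergence is $q.s.$ and hence $\mathbb{P}$-uniform across the tight family.

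The next step is a stability estimate for the associated states. I would repeat the computation of Lemma $\ref{lsr37}(i)$ for the general stably convergent sequence $\mu^{n_k}\to\widehat{\mu}$: splitting $X^{\mu^{n_k}}-X^{\widehat{\mu}}$, the drift and quadratic-variation contributions of the form $\int_0^t\int_{\mathbb{A}}b(s,X_s,\xi)(\mu^{n_k}_s-\widehat{\mu}_s)(d\xi)ds$ vanish by the boundedness and $\xi$-continuity of $b,\gamma$ in $(A_4)$ together with the stable convergence, while the remaining Lipschitz-in-state parts are controlled by $(A_1)$, $(A_2)$ and closed off by Gronwall's lemma. This yields $\widehat{E}[\sup_{0\le t\le T}|X^{\mu^{n_k}}(t)-X^{\widehat{\mu}}(t)|^2]\to0$, and since this holds in $G$-expectation it forces $E^{\mathbb{P}}[\sup_{t}|X^{\mu^{n_k}}-X^{\widehat{\mu}}|^2]\to0$ for every $\mathbb{P}\in\mathcal{P}$ at once.

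From the state stability I would deduce, exactly as in Lemma $\ref{lsr37}(ii)$ via the boundedness and continuity of $\mathcal{L},\Psi$ in $(A_3)$ and dominated convergence, that $J^{\mathbb{P}}(\mu^{n_k})\to J^{\mathbb{P}}(\widehat{\mu})$ for each fixed $\mathbb{P}$; the state error is dominated by $K_1\widehat{E}[\sup_t|X^{\mu^{n_k}}-X^{\widehat{\mu}}|]\to0$ and the measure error $E^{\mathbb{P}}[\int_0^T\int_{\mathbb{A}}\mathcal{L}(t,X^{\widehat{\mu}}_t,\xi)(\mu^{n_k}_t-\widehat{\mu}_t)(d\xi)dt]\to0$ by stable convergence. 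Finally, for each $\mathbb{P}$ we have $\liminf_k J(\mu^{n_k})\ge\lim_k J^{\mathbb{P}}(\mu^{n_k})=J^{\mathbb{P}}(\widehat{\mu})$, and taking the supremum over $\mathbb{P}$ gives $\liminf_k J(\mu^{n_k})\ge J(\widehat{\mu})$; since $\liminf_k J(\mu^{n_k})=\inf_{\mu\in\mathcal{R}}J(\mu)$ and the reverse inequality is trivial, $\widehat{\mu}$ attains the infimum, which is $(\ref{3.44})$. The hard part will be precisely this commutation of the supremum over the mutually singular family $\mathcal{P}$ with the limit in $k$: the sup-liminf inequality is automatic, but it is only usable because the inner convergence $J^{\mathbb{P}}(\mu^{n_k})\to J^{\mathbb{P}}(\widehat{\mu})$ genuinely holds for every $\mathbb{P}\in\mathcal{P}$, which rests on upgrading the state stability to $G$-expectation convergence and on the $q.s.$ nature of the subsequence extracted from the tight set $M$.
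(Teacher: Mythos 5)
Your overall architecture matches the paper's: the identity (\ref{3.43}) is obtained as in Lemma \ref{lsr37}(ii) (the embedding $\mathcal{U}\hookrightarrow\mathcal{R}$ via $\Phi$ for one inequality, the $G$-Chattering Lemma \ref{gch36} together with the convergence (\ref{3.25}) for the other), and the minimizer is produced by compactness of $M$ applied to a minimizing sequence. Where you genuinely diverge --- and improve on the paper --- is the final step. The paper fixes $\mathbb{P}$, minimizes $J^{\mathbb{P}}$, extracts a weakly convergent subsequence, passes to the limit via Skorokhod's representation and dominated convergence, and obtains a $\mathbb{P}$-dependent minimizer $\widehat{\mu}_{\mathbb{P}}$ of $J^{\mathbb{P}}$; it then simply asserts $J(\widehat{\mu})=\min_{\mu}J(\mu)$ without explaining how the family $\{\widehat{\mu}_{\mathbb{P}}\}$ is glued into a single minimizer of the sublinear cost $J=\sup_{\mathbb{P}\in\mathcal{P}}J^{\mathbb{P}}$. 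You instead take a minimizing sequence for $J$ itself and close with the sup--liminf inequality $\liminf_{k}J(\mu^{n_k})\geq J^{\mathbb{P}}(\widehat{\mu})$ for every $\mathbb{P}$, which is the correct way to commute the supremum over the mutually singular family with the limit; the same remark applies to (\ref{3.43}), where you work directly at the level of $J$ rather than of each $J^{\mathbb{P}}$. Your replacement of the paper's distributional limit passage by a pathwise Gronwall estimate (rerunning the computation of Lemma \ref{lsr37}(i) for a general stably convergent sequence of relaxed controls) is also legitimate, since that computation uses only $(A_1)$, $(A_2)$, $(A_4)$ and stable convergence, not the Dirac structure of the approximants.

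The one step you state too quickly is the extraction itself: tightness and Prohorov's theorem give relative compactness of the \emph{distributions} of $(\mu^{n})$, i.e.\ convergence in law along a subsequence, not stable convergence $q.s.$ of the random measures themselves. Compactness of $M$ yields, for each fixed $\omega$, an $\omega$-dependent convergent subsequence; to obtain one subsequence working quasi-surely, with a limit that is again $\mathbb{F}^{\mathcal{P}}$-progressively measurable and hence an element of $\mathcal{R}$, you need either a Skorokhod-type representation (which is applied under a single $\mathbb{P}$ and so reintroduces the mutual-singularity problem you are trying to avoid) or a genuinely pathwise compactness argument. The paper has the same soft spot, so this is an inherited rather than a new gap; but since your whole strategy hinges on the convergence being $q.s.$ so that it is ``$\mathbb{P}$-uniform across the tight family,'' you should not present it as an automatic consequence of Prohorov's theorem.
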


The proof of the existence of an optimal relaxed control entails
demonstrating that the sequence of distributions of the processes $(\mu
^{n}, $ $X^{\mu^{n}})_{n\geq 0}$ is tight for a given topology on the state
space and then proving that we can extract a subsequence that converges in
law to a process $(\mu ,$ $X^{\mu })$, that satisfies $(\ref{3.17})$.
To achieve the proof, we show that under some regularity conditions of $%
\left( J^{\mathbb{P}}(\mu ^{n})\right) _{n}$ converges to $J^{\mathbb{P}}(%
\widehat{\mu })$ which is equal to $\underset{\mu \in \mathcal{R}}{\inf }J^{%
\mathbb{P}}(\mu )$ and then $(\widehat{\mu },$ $X^{\widehat{\mu }})$ is
optimal.

\begin{lemma}(\cite{bahlali2014}) 
The sequence of distributions of the relaxed controls $\left( \mu ^{n}\right) _{n\geq 0}$ is relatively compact in $M$.
\end{lemma}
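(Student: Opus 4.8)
The plan is to exploit the fact that the ambient space $M$ of relaxed controls is \emph{itself} compact, so that relative compactness of any sequence of distributions valued in $M$ becomes automatic. The argument splits into two independent parts: establishing compactness of $M$ for the topology of stable convergence, and then invoking Prokhorov's theorem at the level of the laws. Crucially, no moment or tightness estimate on the dynamics $(X^{\mu^n})$ is needed for this particular assertion; the compactness comes entirely from the structure of the control space.

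First I would describe $M$ concretely. A relaxed control $q(dt,d\xi)=\mu_t(d\xi)\,dt$ is a positive measure on the product $[0,T]\times\mathbb{A}$ whose total mass equals $T$ and whose time-marginal (the image under projection onto $[0,T]$) is Lebesgue measure $dt$. Since $\mathbb{A}$ is a compact Polish space and $[0,T]$ is compact, the product $[0,T]\times\mathbb{A}$ is a compact metric space, and the set of positive measures on it with fixed total mass $T$ is weakly compact (this is the measure-theoretic form of Banach--Alaoglu, i.e.\ Prokhorov's criterion over a compact base, where tightness is vacuous). Within this weakly compact set, $M$ is the subset where the time-marginal equals $dt$. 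This constraint is preserved under weak limits: if $q^{m}\to q$ weakly and each $q^{m}$ has time-marginal $dt$, then testing against functions $f(t)$ depending on the time variable alone forces $q$ to have the same time-marginal. Hence $M$ is a closed subset of a weakly compact set, so $M$ is compact; being a set of measures over a compact metric space, it is also metrizable, and one checks that the stable convergence used in the paper coincides with this weak convergence on $M$.

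Finally, the distributions $\mathrm{Law}(\mu^n)$ are probability measures on the compact metric space $M$, i.e.\ elements of $\mathcal{P}(M)$. For any $\varepsilon>0$ the compact set $K_\varepsilon:=M$ satisfies $\mathrm{Law}(\mu^n)(K_\varepsilon)=1\geq 1-\varepsilon$ for every $n$, so the family $\bigl(\mathrm{Law}(\mu^n)\bigr)_n$ is trivially tight. By Prokhorov's theorem $\mathcal{P}(M)$ is weakly compact, and therefore $\bigl(\mathrm{Law}(\mu^n)\bigr)_n$ is relatively compact, which is the assertion (the phrase ``relatively compact in $M$'' should be read at the level of laws on the compact set $M$).

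I expect the only genuine difficulty to lie in the first, purely topological, step: verifying that the stable-convergence topology on relaxed controls is exactly the weak topology induced on the measures $q$ on $[0,T]\times\mathbb{A}$, and that compactness of $\mathbb{A}$ (together with the fixed Lebesgue time-marginal preventing any escape of mass in the time direction) does yield weak compactness of $M$. Once $M$ is known to be compact, the relative compactness of the laws is immediate, which is precisely why the result can be borrowed from \cite{bahlali2014} without any estimate specific to the $G$-framework.
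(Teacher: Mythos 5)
Your proposal is correct and follows essentially the same route as the paper: the paper (in the proof of Theorem \ref{relaxed}) simply observes that the $\mu^n$ are random variables with values in the compact set $M$, so tightness of their laws is automatic and Prohorov's theorem gives relative compactness, which is exactly your final step. The only difference is that you additionally justify why $M$ is compact (closedness of the fixed-time-marginal constraint inside the weakly compact set of measures of mass $T$ on $[0,T]\times\mathbb{A}$), a fact the paper delegates to the cited reference.
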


\begin{proof}[Proof of Theorem \ref{relaxed}]
 The relaxed controls $\mu ^{n}$ are random variables in the compact set $M$. Then by Prohorov's theorem the associated family of distribution $\left( \mu ^{n}\right) _{n\geq 0}$ is tight on the space $M$, then it is relatively compact in $M$. Thus, there exists a subsequence $(\mu ^{n_{k}},X^{\mu ^{n_{k}}})_{k\geq 0}$ of $(\mu^{n},X^{\mu ^{n}})_{n\geq 0}$ that weakly converges to $(\widehat{\mu },X^{\widehat{\mu }})$ which solves $(\ref{3.17})$. Using Skorohod's embedding theorem, the continuity and boundness assumptions of the functions  $\mathcal{L}$ and $\Psi $, and Lebesgue Dominated Convergence Theorem, we finally obtain : 
\begin{equation*}
\inf_{\mu \in \mathcal{R}}J^{\mathbb{P}}(\mu )=\underset{k\rightarrow \infty 
}{\lim }J^{\mathbb{P}}(\mu ^{n_{k}})=J^{\mathbb{P}}(\widehat{\mu }).
\end{equation*}
Then, from \textbf{Lemma} $\ref{lsr37}$ \textbf{(stability results)}, for every $\mathbb{P}\in \mathcal{P}$ there exists a relaxed control $\widehat{\mu }\in \mathcal{R}$ such that%
\begin{equation*}
\widehat{\mu }_{\mathbb{P}}=\arg \min_{\mu \in \mathcal{R}}J^{\mathbb{P}%
}(\mu ).
\end{equation*}
Then, we conclude that%
\begin{equation*}
J(\widehat{\mu })=\min_{\mu \in \mathcal{R}}J(\mu ).
\end{equation*}
\end{proof}
\begin{remark}
\bigskip The relaxed model is a real extension of the strict model, as the
infimum of the two cost functions are equal, and the relaxed model has an
optimal solution, as shown by the prior results.
\end{remark}
\section{Numerical analysis: Euler-Maruyama method for G-Neutral SFDEs}\label{na4}

In this section we present a numerical analysis of a $G$-NSFDE. The idea is to use the Euler-Maruyama scheme to solve the G-NSFDE $(\ref{UCOGNSDE})$. Let $%
\tau >0,T>\tau ,N\in \mathbb{N},h=\frac{T+\tau }{N}$ and $t_{0}=-\tau
,t_{1}=-\tau +h,\cdots ,t_{N_{0}}=0,\cdots ,t_{N}=T$ be a discretization of
the interval $[-\tau ,T].$ Consider the following Euler-Maruyama scheme: 

\begin{equation}
\left\{ 
\begin{array}{l}
\mbox{Given an initial data }\eta :[-\tau ,0]\rightarrow \mathbb{R}^{n},%
\mbox{ and put }X(t)=\eta (t)\mbox{ for }t=t_{0},t_{1},\cdots t_{N_{0}} \\ 
\mbox{Now for }i=N_{0},N_{0}+1\cdots N \\ 
X\left( t_{i+1}\right) =X\left( t_{i}\right) +Q\left(
t_{i+1},X_{t_{i+1}}\right) -Q\left( t_{i},X_{t_{i}}\right) +b\left(
t_{i},X_{t_{i}}\right) h+\gamma \left( t_{i},X_{t_{i}}\right) \left\langle
B\right\rangle _{t_{i}}\  \\ 
+\sigma \left( t_{i},X_{t_{i}}\right) (B_{t_{i+1}}-B_{t_{i+1}})
\end{array}
\right. \ \   \label{Euler-M}
\end{equation}%
where, $X_{t_{i}}:=\{X_{t_{i}}(\lambda ):-\tau \leq \lambda \leq 0\},$ $%
X_{t_{i}}(\lambda ):=X(t_{i+k})+\frac{\lambda -t_{k}}{h}%
[X(t_{i+k+1})-X(t_{i+k})],k$ is such that $t_{k}\leq \lambda <t_{k+1}.$ In
order that our algorithm works we have to give a value for $X_{t_{0}-h},$ we
can set it equal $X_{t_{0}-h}=X_{t_{0}}=\eta (-\tau ).$

For the simulation of the increments of the $G$-Brownian motion and its
quadratic variation we follow the same method given by \cite{yang2016}
by simulating its corresponding $G$-PDE using finite difference.

In Figure \ref{density_different_sigmaMax} (resp. Figure %
\ref{distribution_different_sigmaMax}) we represent the simulation of the
density (resp. distribution) of the $G$-Normal BM for $\sigma _{min}=0.8$
and different values of $\sigma _{max}$.

Also, in Figure \ref{density_different_sigmaMin} (resp. Figure %
\ref{distribution_different_sigmaMin}) we represent the simulation of the
density (resp. distribution) of the $G$-Normal BM for $\sigma _{max}=1.3$
and different values of $\sigma _{min}.$

\begin{figure}[!ht]
\begin{center}
\textbf{Simulation of the G-Normal density and distribution for $%
\sigma_{min}=0.8$ and different $\sigma_{max}$}
\par
\medskip
\end{center}
\par
\begin{minipage}{0.43\textwidth}
		\centering
		\includegraphics[width=1.2\textwidth]{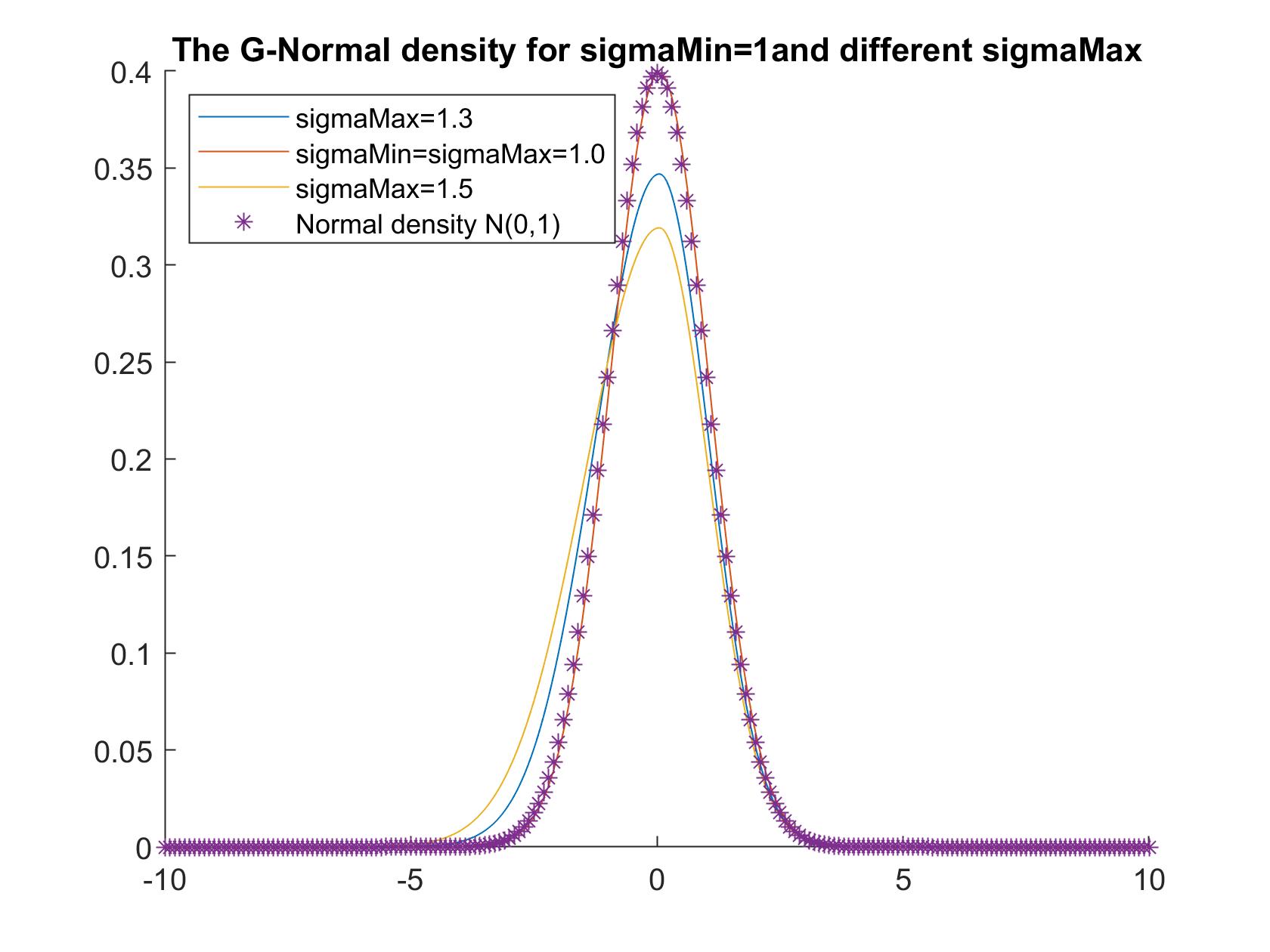}
		\caption{G-Normal density} \label{density_different_sigmaMax}
	\end{minipage}		
\hfill 
\begin{minipage}{0.43\textwidth}
		\centering
		\includegraphics[width=1.85\textwidth]{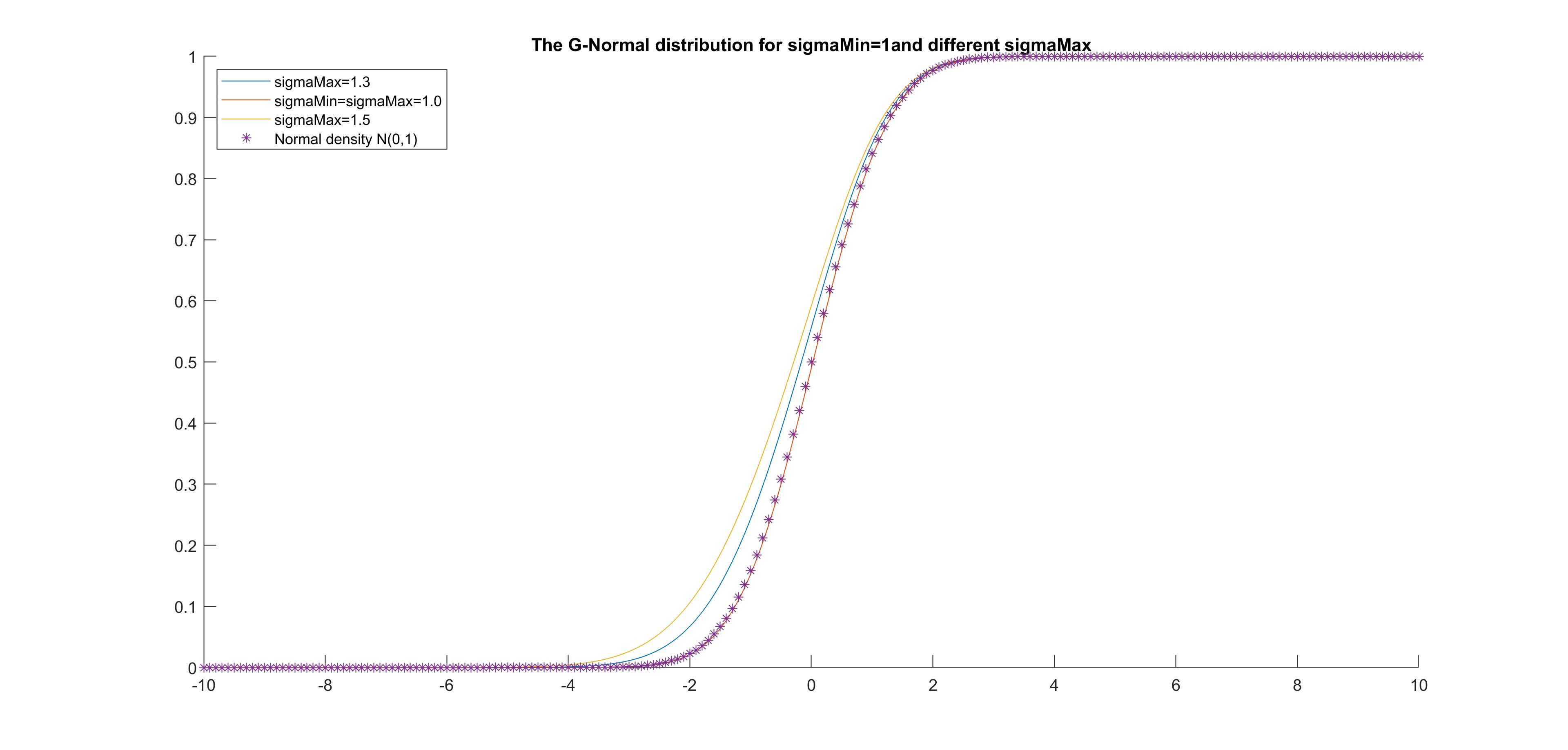}
			\caption{G-Normal distribution} \label{distribution_different_sigmaMax}
	\end{minipage}	
\end{figure}

\begin{figure}[!ht]
\begin{center}
\textbf{Simulation of the G-Normal density and distribution for $%
\sigma_{max}=1.3$ and different $\sigma_{min}$}
\par
\medskip
\end{center}
\par
\begin{minipage}{0.43\textwidth}
		\centering
		\includegraphics[width=1.2\textwidth]{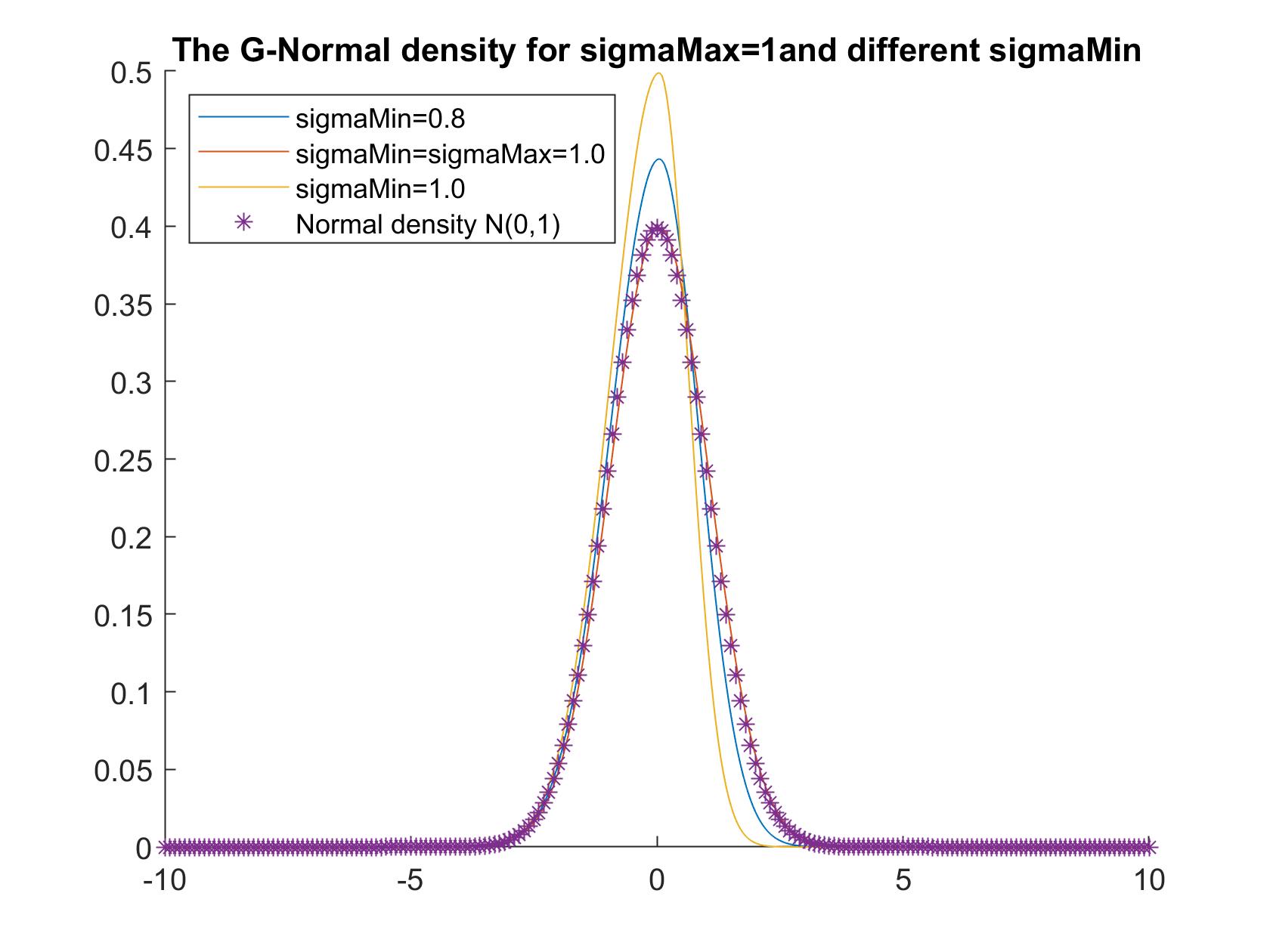}
					\caption{G-Normal density} \label{density_different_sigmaMin}
	\end{minipage}		
\hfill 
\begin{minipage}{0.43\textwidth}
		\centering
		\includegraphics[width=1.85\textwidth]{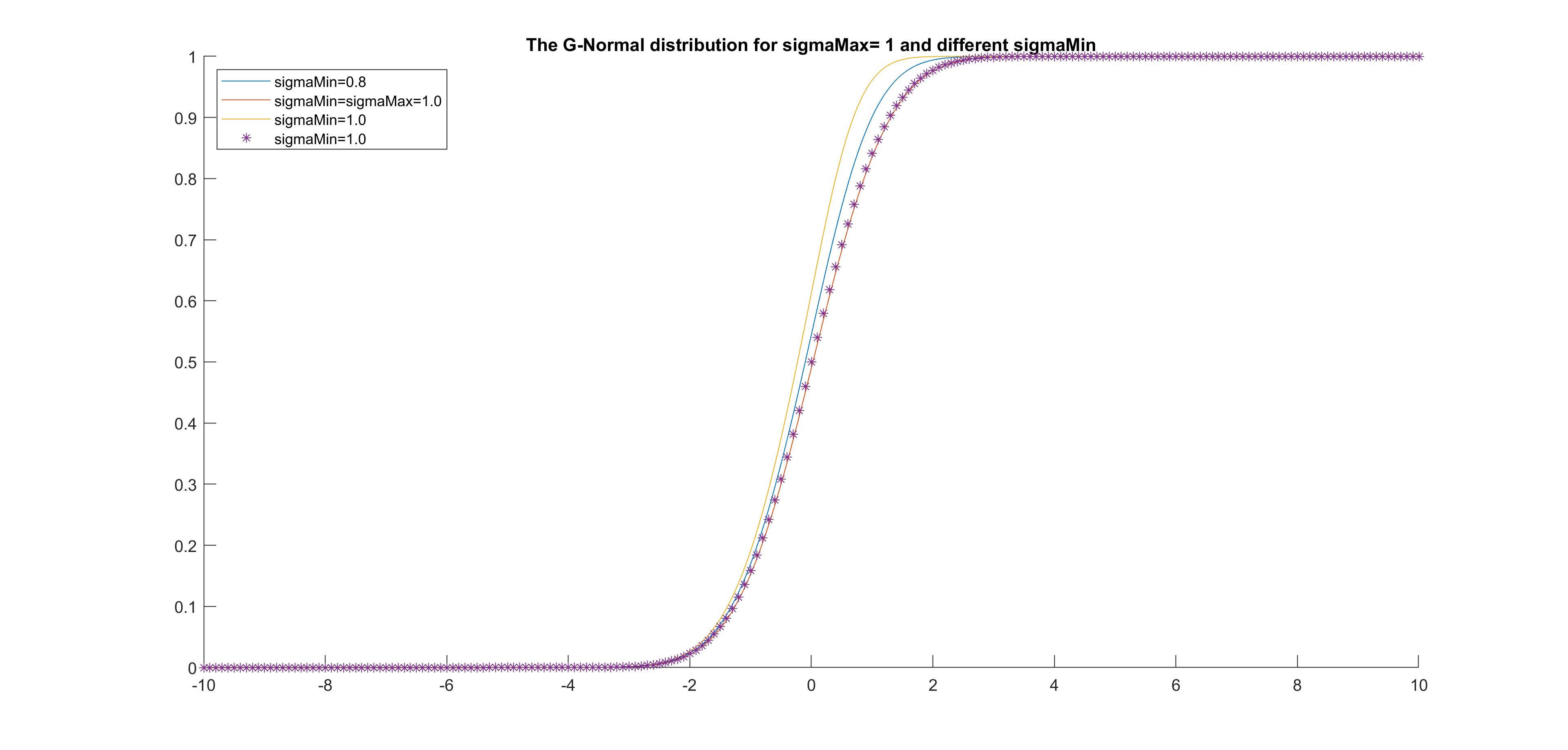}
			\caption{G-Normal distribution} \label{distribution_different_sigmaMin}	
	\end{minipage}	
\end{figure}

Now, let take in this part of this section, $T=1,\tau =0.1$ and the
coefficients of the $G$-NSFDE $(\ref{UCOGNSDE})$ given by: 
\begin{equation*}
Q(t,X_{t}):=0.3\int_{t-\tau }^{t}X(s)ds
\end{equation*}%
\begin{equation*}
b(t,X_{t}):=10\int_{t-\tau }^{t}X(s)ds
\end{equation*}%
\begin{equation*}
\gamma (t,X_{t}):=0.4\int_{t-\tau }^{t}X(s)ds
\end{equation*}%
\begin{equation*}
\sigma (t,X_{t}):=5\int_{t-\tau }^{t}X(s)ds.
\end{equation*}%
For these given data and coefficients we get the following results:

In Figure \ref{G-NSFDE_BM} (resp. Figure \ref{G-NSFDE_BM_Max3}) we
represent the trajectories of the solution of the $G-$NSFDE where the $G$-
Brownian motion is with $\sigma _{max}=1$ (resp. $\sigma _{max}=3$), $\sigma
_{min}=0.65$ and the initial condition $(X_{0}(t))_{-\tau \leq t\leq 0}$
solution of $dX_{0}(t)=dW(t)$ with $X_{0}(-\tau )=0$ where $W$ is the
standard Brownian motion.

\begin{figure}[!ht]
\includegraphics[width=1.0\textwidth]{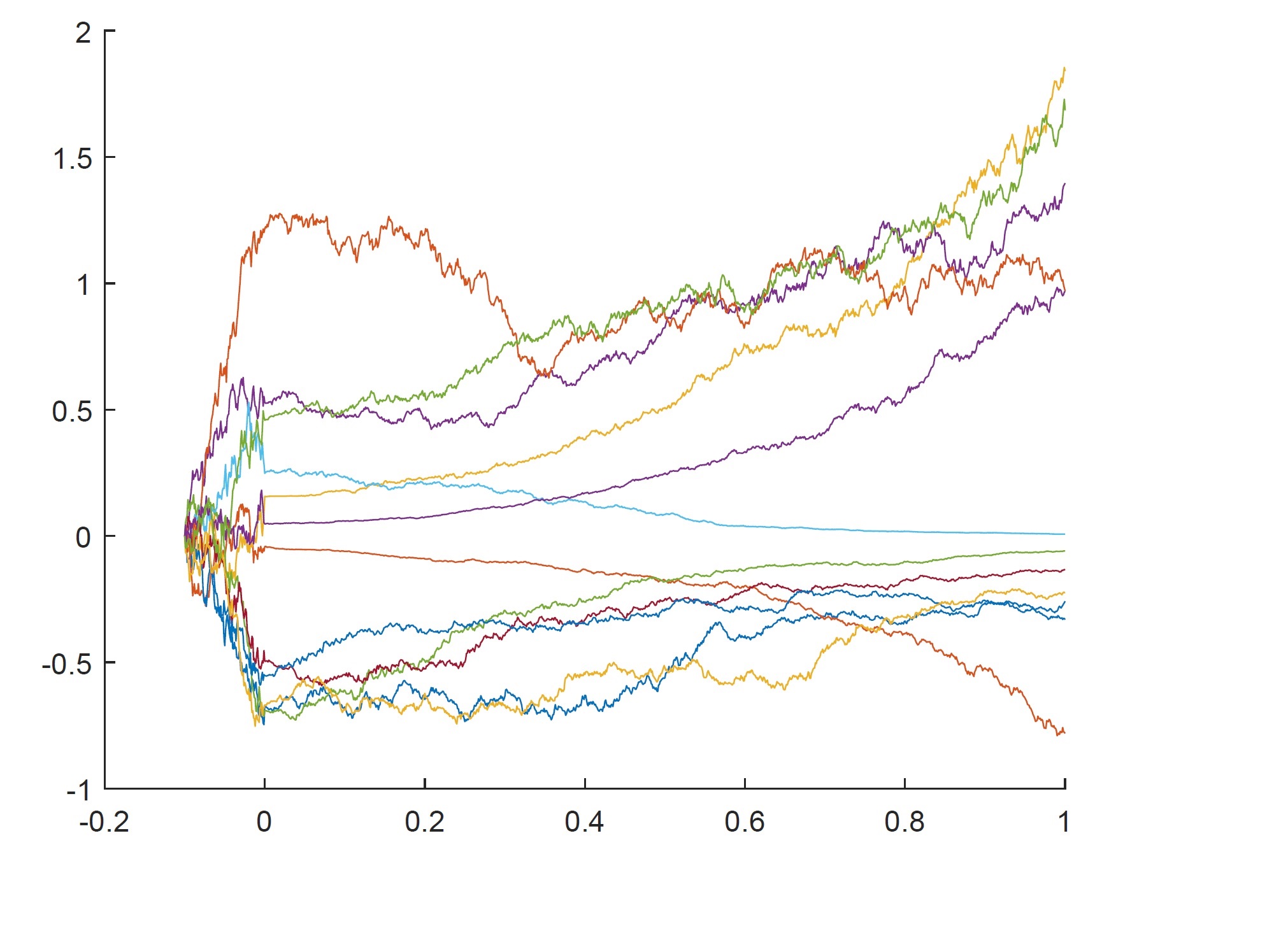}
\caption{Solution G-NSFDE with random initial condition and $\protect\sigma%
_{max}=1, \protect\sigma_{min}=0.65.$}
\label{G-NSFDE_BM}
\end{figure}

\begin{figure}[h]
\includegraphics[width=1.0\textwidth]{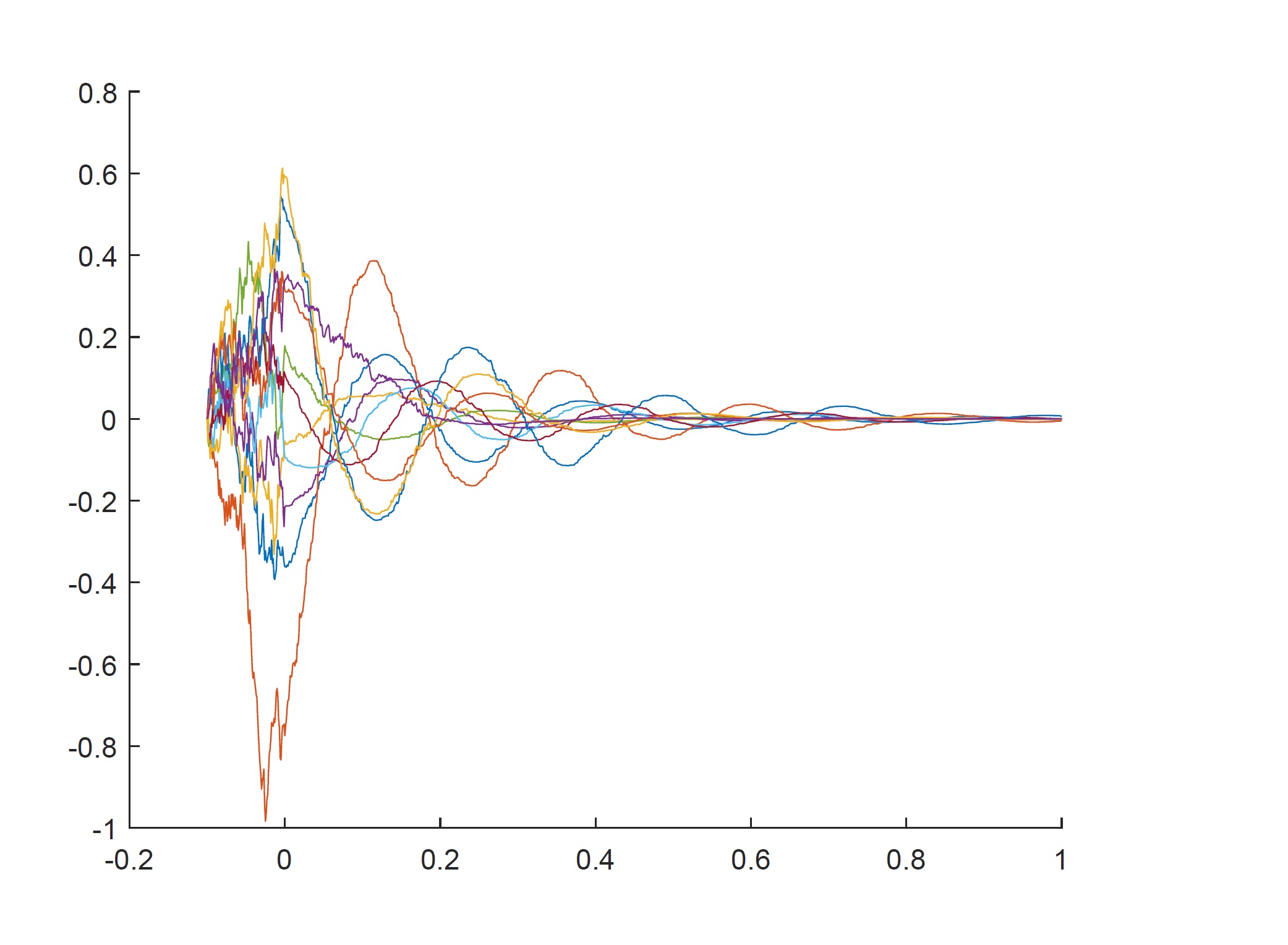}
\caption{Solution G-NSFDE with random initial condition BM and $\protect\sigma%
_{max}=3, \protect\sigma_{min}=0.65.$}
\label{G-NSFDE_BM_Max3}
\end{figure}

In Figure \ref{G-NSFDE_EXP} we represent the trajectories of the solution
of the $G$-NSFDE where the $G$-Brownian motion is with $\sigma
_{max}=1,\sigma _{min}=0.65$ and deterministic initial condition $%
(X_{0}(t))_{-\tau \leq t\leq 0}$ given by $X_{0}(t)=\exp (t)$.

\begin{figure}[h]
\includegraphics[width=1.0\textwidth]{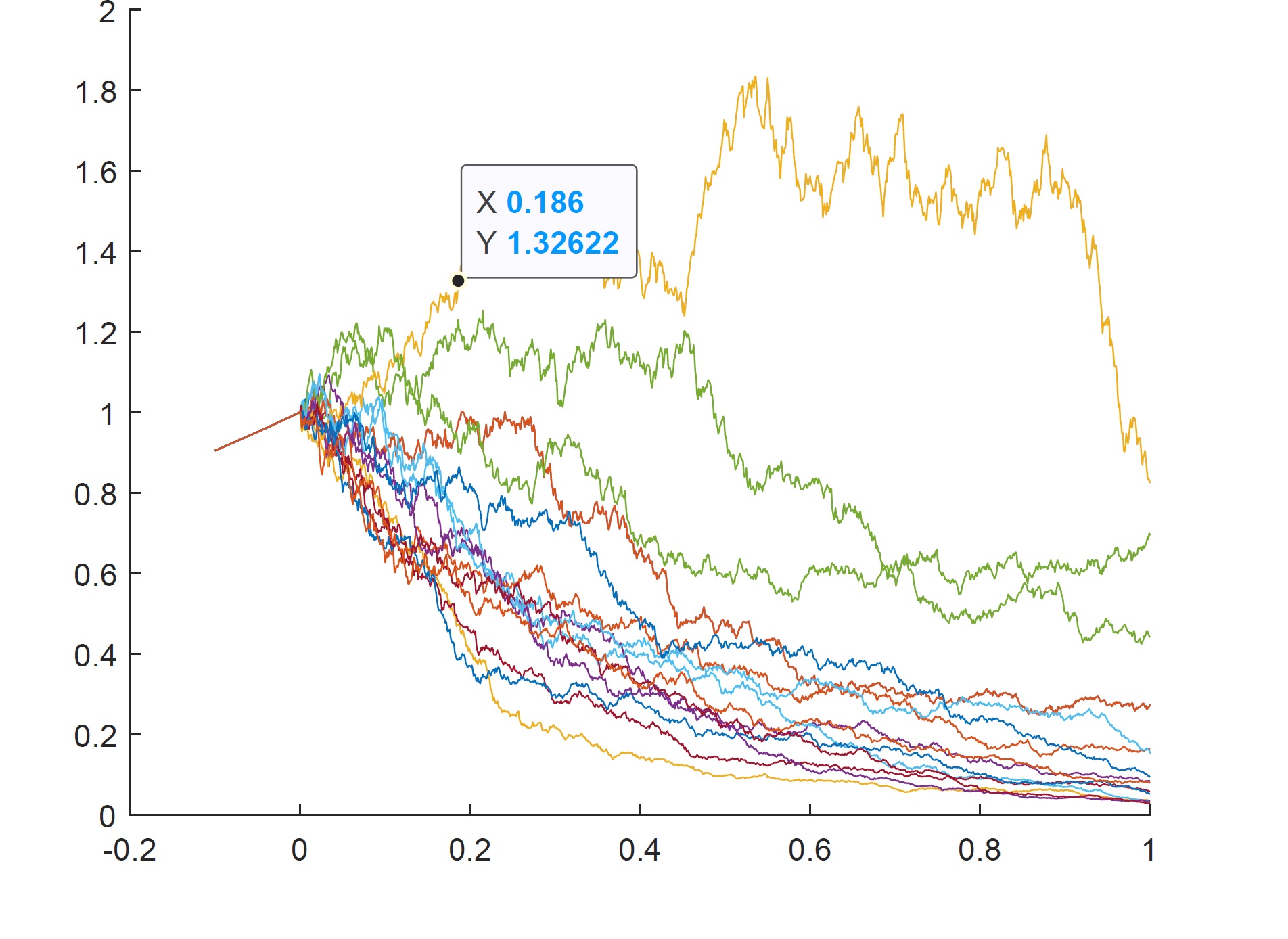}
\caption{Solution G-NSFDE with deterministic initial condition $\exp(t)$ and $%
\protect\sigma_{max}=1, \protect\sigma_{min}=0.65.$}
\label{G-NSFDE_EXP}
\end{figure}

In Figure \ref{G-NSFDE_DETERMINISTIC} we represent the trajectories of the
solution of the $G-$NSFDE where the $G$-Brownian motion is with $\sigma
_{max}=1,\sigma _{min}=0.65$ and deterministic initial condition $%
(X_{0}(t))_{-\tau \leq t\leq 0}$ given by: $\forall t\in \lbrack -\tau
,0],X_{0}(t,\omega )$ a fixed value between $[-0.2,0.2]$.

\begin{figure}[h]
\includegraphics[width=1.0\textwidth]{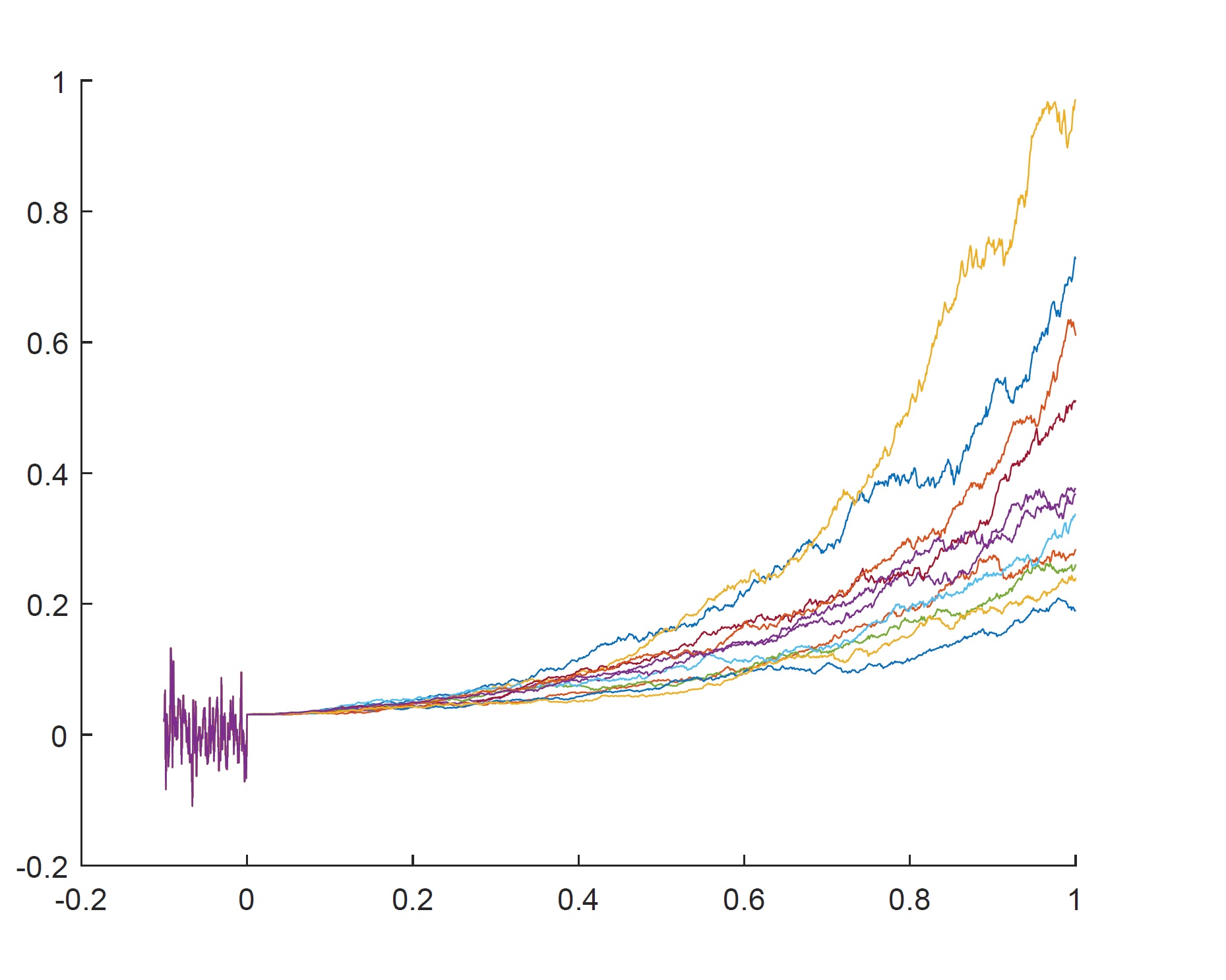}
\caption{Solution G-NSFDE with deterministic initial condition $X_0(t)$ take
values between $[-0.2,0.2]$ for $t\in[-\protect\tau,0]$ and $\protect\sigma
_{max}=1, \protect\sigma_{min}=0.65.$}
\label{G-NSFDE_DETERMINISTIC}
\end{figure}

\clearpage

\bibliographystyle{plain}
\bibliography{biblio}

\end{document}